\theoremstyle{plain}
\newtheorem{theorem}{Theorem}[section]
\newtheorem*{theorem*}{Theorem}
\newtheorem{lemma}[theorem]{Lemma}
\newtheorem{corollary}[theorem]{Corollary}
\newtheorem{prop}[theorem]{Proposition}
\theoremstyle{definition}
\newtheorem{example}[theorem]{Example}
\theoremstyle{remark}
\title{The number of torsion divisors in a strongly \textit{F}-regular ring is bounded by the reciprocal of \textit{F}-signature}
\author{Isaac Martin}
\address{Centre for Mathematical Sciences, University of Cambridge. Wilberforce Rd, Cambridge CB3 0WA}
\email{ikm23@cam.ac.uk}
\date{\today}
\begin{document}

\maketitle

\maketitle

\begin{abstract}
Polstra showed that the cardinality of the torsion subgroup of the divisor class group of a local strongly $F$-regular ring is finite. We  expand upon this result and prove that the reciprocal of the $F$-signature of a local strongly $F$-regular ring $R$ bounds the cardinality of the torsion subgroup of the divisor class group of $R$. 
\end{abstract}

\section{Introduction}
Throughout this article, $R$ is a commutative Noetherian ring of prime characteristic $p > 0$ and $F^e: R\rightarrow R$ to is the $e$th iterate of the Frobenius endomorphism. We also assume that the Frobenius endomophism is a finite map, i.e. that $R$ is $F$-finite. Given an $R$-module $M$, we denote by $F^e_*M$ the $R$-module obtained from $M$ by restricting scalars along the $F^e$. That is, the endofunctor $F^e_*:\Rmod \rightarrow \Rmod$ takes $M$ to the $R$-module $F^e_*M$, which is precisely $M$ as an Abelian group and whose $R$-action is defined according to the $R$-action on $M$ by $r\cdot F^e_*m := r^{pe}m$ (here, if $m \in M$, we use $F^e_*m$ to denote the corresponding element in $F^e_*M$). It is clear that $F^e_*$ is exact. 


Associated to $F$-finite local rings is an invariant known as $F$-signature. This was first introduced by Smith and Van den Bergh \cite{smith2002simplicity}, was formally defined by Huneke and Leuschke \cite{Huneke_2002}, and was proven to exist under general hypotheses by Tucker \cite{Tucker_2012}. Because we work only with integral domains, for our purposes we define the $F$-signature of $R$ to be the limit
\begin{equation*}
    s(R) := \lim_{e\rightarrow \infty} \frac{\frk F^e_*R}{\rank_R F^e_*R}.
\end{equation*}
Here, $\frk F^e_*R$ denotes the free-rank of $F^e_*R$, the maximal rank of a free-module appearing in a direct sum decomposition of $F^e_*R$.

The ring $R$ is said to be strongly $F$-regular if for each nonzero $r \in R$ there is some $e \in \bN$ and $\varphi \in \Hom_R(F^e_*R,R)$ such that $\varphi(F^e_*r) = 1$. Aberbach and Leuschke proved that a local ring of prime characteristic is strongly $F$-regular if and only if its $F$-signature is positive \cite{aberbach2002fsignature}. Every stongly $F$-regular ring is a normal domain and therefore has a well-defined divisor class group on $\Spec(R)$, which we call $\Cl(R)$. Polstra showed that if $R$ is strongly $F$-regular, then the torsion subgroup of $\Cl(R)$ is finite \cite{polstra2020theorem}. Together, these results lend plausibility to the following theorem, the primary contribution of this paper:
\begin{theorem*}
    Let $(R,\frakm,k)$ be a local $F$-finite and strongly $F$-regular ring of prime characteristic $p > 0$. Then the cardinality of the torsion subgroup of the divisor class group of $R$ is bounded by $1/s(R)$ where $s(R)$ is the $F$-signature of $R$.
\end{theorem*}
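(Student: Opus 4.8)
The plan is to show that $\rank_R F^e_*R$ accounts for a full ``copy of the free rank'' for each torsion class of $\Cl(R)$, so that summing over the (finite) torsion subgroup forces $|G|\cdot s(R)\le 1$; roughly, since a torsion class $g$ differs from $0$ only by torsion, the divisorial module $M_g$ should occur in $F^e_*R$ asymptotically as often as $R$ itself.

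First I would reduce to $R$ complete. Completion is faithfully flat, leaves $s(R)$ unchanged, and induces an injection $\Cl(R)\hookrightarrow\Cl(\widehat R)$ preserving torsion, so it suffices to bound the torsion of $\Cl(\widehat R)$; for $R$ complete local, finitely generated modules satisfy Krull--Schmidt. Let $G$ be the torsion subgroup of $\Cl(R)$ (finite, by Polstra's theorem), and for a divisor class $c$ let $M_c$ denote a rank-one reflexive module of class $c$, with $M_0=R$; each $M_c$ is indecomposable with $\operatorname{End}_R M_c=R$, and $M_g\not\cong M_h$ for $g\neq h$ in $G$. Writing $\mu_L(N)$ for the multiplicity of an indecomposable $L$ in a Krull--Schmidt decomposition of $N$ (so $\mu_R(F^e_*R)=\frk F^e_*R$), the elementary half of the argument is the rank inequality $\rank_R F^e_*R\ge\sum_{g\in G}\mu_{M_g}(F^e_*R)$, valid because the $M_g$ are pairwise non-isomorphic of rank one. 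Granting the key estimate $\liminf_{e}\mu_{M_g}(F^e_*R)/\rank_R F^e_*R\ge s(R)$ for each $g\in G$, dividing by $\rank_R F^e_*R$, taking $\liminf_{e\to\infty}$, and using finiteness of $G$ yields $1\ge\sum_{g\in G}\liminf_e\mu_{M_g}(F^e_*R)/\rank_R F^e_*R\ge|G|\,s(R)$, which is the theorem.

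The substance is therefore the key estimate. To study $\mu_{M_g}(F^e_*R)$ I would use the projection formula $A\otimes_R F^e_*B\cong F^e_*(F^{e*}A\otimes_R B)$ together with the fact that Frobenius multiplies divisor classes by $p^e$; these give $(M_g^{\vee}\otimes_R F^e_*R)^{\vee\vee}\cong F^e_*M_{-p^eg}$. Since $(M_g^{\vee}\otimes -)^{\vee\vee}$ is an autoequivalence of the category of reflexive modules which permutes indecomposables and sends $M_g$ to $R$, this identifies $\mu_{M_g}(F^e_*R)=\frk(F^e_*M_{-p^eg})$. If $g$ has $p$-power order then $p^eg=0$ for $e\gg0$, so $\mu_{M_g}(F^e_*R)=\frk F^e_*R$ exactly and the estimate is trivial; hence we may assume $|G|$ is prime to $p$, so $c\mapsto -p^ec$ is a bijection of $G$ and, as $e$ varies, $-p^eg$ runs through one orbit of multiplication by $p$. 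The estimate then amounts to $\liminf_e\frk(F^e_*M)/\rank_R F^e_*M\ge s(R)$ for $[M]\in G$. Here strong $F$-regularity enters, via its splitting criterion, to guarantee that the torsion module $M$ occurs as a direct summand of $F^{e_0}_*R$ for suitable $e_0$; and decomposing $F^{e+e'}_*R=F^{e'}_*(F^e_*R)$ and applying the twisting identity to each $F^{e'}_*M_g$ produces the convolution-type inequality
\[
  \mu_{M_h}(F^{e+e'}_*R)\ \ge\ \sum_{g\in G}\mu_{M_g}(F^{e}_*R)\,\mu_{M_{h-g}}(F^{e'}_*R)
\]
whenever $e'$ is a multiple of the multiplicative order of $p$ modulo the exponent of $G$, which lets one propagate ``$M_g$ appears'' to a statement about its asymptotic density.

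I expect the main obstacle to be the quantitative endgame of the key estimate: naively iterating the convolution inequality only bootstraps to density $\ge s(R)/\rank_R F^{e_0}_*R$, losing a Frobenius-degree factor, because supermultiplicativity is measured against the sequence $\frk F^e_*R/\rank_R F^e_*R\to s(R)<1$. Recovering the sharp constant $s(R)$ seems to require a genuinely finer input, and the natural device is the cyclic cover $S=\bigoplus_{g\in G}M_g$: with $\gcd(p,|G|)=1$ this is a module-finite quasi-\'etale extension of $R$ in which every class of $G$ becomes trivial, hence $S$ is again strongly $F$-regular, $\rank_R F^e_*S=|G|\,\rank_R F^e_*R$ while $\rank_S F^e_*S=\rank_R F^e_*R$, and $F^e_*S\cong(S\otimes_R F^e_*R)^{\vee\vee}$ over $S$. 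Because each $M_g$ becomes free over $S$, every $M_g$-summand of $F^e_*R$ contributes a free $S$-summand of $F^e_*S$, so the $S$-free rank of $F^e_*S$ is at least $\sum_{g\in G}\mu_{M_g}(F^e_*R)$; carrying out a sufficiently careful analysis of $s(S)$ (for instance establishing $s(S)=|G|\,s(R)$, or running a trace argument through the Galois action of the character group of $G$, using $s(S)\le 1$) should deliver $\sum_{g\in G}\mu_{M_g}(F^e_*R)\ \ge\ |G|\,\frk F^e_*R-o(\rank_R F^e_*R)$, the summed form of the key estimate, which already suffices for the theorem.
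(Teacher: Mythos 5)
Your skeleton matches the paper's: identify the multiplicity of $M_g$ in $F^e_*R$ with $\frk F^e_*M_{-p^eg}$ via the reflexified-tensor twisting identity, observe that the pairwise non-isomorphic rank-one torsion modules contribute disjointly to $\rank F^e_*R$ so that $\sum_{g}\mu_{M_g}(F^e_*R)\leq \rank F^e_*R$, and pass to the limit. (Your Krull--Schmidt-after-completion justification of the rank inequality is a legitimate substitute for the paper's Lemma \ref{lem:refinement_cor2.2}, which avoids completing by refining Polstra's direct-summand lemma.) However, there is a genuine gap at exactly the point you flag as the ``main obstacle'': the key estimate $\liminf_e \frk(F^e_*M_{-p^eg})/\rank F^e_*R\geq s(R)$ is never actually established. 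Your convolution route is, as you concede, off by a factor of $\rank F^{e_0}_*R$, and the cyclic-cover route is left at the level of ``a sufficiently careful analysis of $s(S)$ \dots should deliver'' the bound; the transformation formula $s(S)=|G|\,s(R)$ for quasi-\'etale covers is itself a nontrivial theorem of Carvajal-Rojas--Schwede--Tucker type, and you would additionally need to rule out, or separately dispose of, $p$-torsion in $G$ before the cover $S=\bigoplus_g M_g$ even exists as a ring. So the load-bearing analytic step is missing.

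The ingredient that closes the gap with no loss of constant is Tucker's theorem on the $F$-signature of modules \cite[Theorem 4.11]{Tucker_2012}: for any finitely generated $R$-module $M$, the limit $\lim_{e}\frk(F^e_*M)/\rank F^e_*R$ exists and equals $s(R)\cdot\rank M$. Applied to the finitely many rank-one modules $R(D')$ with $D'\in\tors(\Cl(R))$, this produces finitely many sequences all converging to $s(R)$; since your sequence $e\mapsto \frk F^e_*M_{-p^eg}/\rank F^e_*R$ samples, at each $e$, a term of one of these finitely many sequences, it converges to $s(R)$ as well --- the paper runs this as an equivalence-of-Cauchy-sequences argument in Lemma \ref{lem:RD_summands_tend_to_Fsig}. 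With that input your architecture closes immediately, the $\liminf$ becomes a genuine limit, and the entire cyclic-cover and convolution apparatus is unnecessary; the only other ingredient needed is the splitting statement you already invoke, that each $R(D)$ is a summand of $F^e_*R$ for $e\gg 0$.
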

\noindent
The author notes that $1/s(R)$ has previously been used to establish upper bounds on other related invariants, notably on the order of the \'etale fundamental group of a strongly $F$-regular ring \cite{schwede2016} and on the order of an individual torsion divisor $D$ in a strongly $F$-regular ring \cite{carva2017}. These results further motivate this article. We further note that the techniques employed here are largely inspired by the novel proof in \cite[Theorem 3.8]{polstra2019equimultiplicity} of the classic result first proven in \cite{Huneke_2002}: $s(R) = 1$ if and only if $R$ is regular. 

\bigskip

\noindent
\textbf{Acknowledgements.} The author would like to thank Karl Schwede and Thomas Polstra for many valuable hours of mentoring and dialogue. He also thanks Anurag Singh for discussion regarding examples \ref{ex:strict_inequality} and \ref{ex:equality}.

\section{Preliminary Results and Notation}
For $R$-modules $M$ and $N$, denote by $a^{M}(N)$ the maximal number of $M$ summands appearing in a direct sum decomposition of $N$. In the case that $N = F^e_*R$, we say that $a_e^M(R) := a^M(F^e_*R)$. We use $\tors(\Cl(R))$ to denote the torsion subgroup of $\Cl(R)$, the divisor class group of $R$.

\bigskip

\subsection{Divisorial Ideals}
This section is included for convenience, and readers may choose to skip it. The results in this section are not new, but rather a collection of proofs for commonly used tricks involving divisorial ideals. We establish Lemma \ref{lem:1-dim_reg_local_ring_iso} before moving onto the primary result of this section, Proposition \ref{prop:divisor_arithmetic}, which is used throughout this document to manipulate divisorial ideals.

\bigskip

Recall that if $R$ is a Noetherian normal domain with $X = \Spec(R)$, then we let $\Div(X)$ (or sometimes $\Div(R)$) be the free Abelian group on the height 1 primes of $R$. Denote by $K$ the fraction field of $R$. If we fix a height 1 prime $\frakp$ in $R$, then $R_\frakp$ is a regular local ring of Krull dimension 1, and is therefore a principal ideal domain with fraction field $K$. It's maximal ideal is $\frakp R_\frakp$, and is generated by some element $\pi_\frakp \in R_\frakp$. If $0 \neq f \in K$, then we may uniquely write $f$ as $u\pi_\frakp^N$ for some unit $u \in R_\frakp$ and integer $N$. Thus, for each height 1 prime, we have a valuation $\nu_\frakp: K^\times \rightarrow \bZ$ defined 
\begin{equation*}
    \nu_\frakp(f) = N.
\end{equation*}
There are only finitely many height 1 primes $\frakp$ such that $\nu_\frakp(f) \neq 0$, so 
\begin{equation*}
    \div(f) = \sum_{\substack{\frakp \in \Spec(R) \\ \height \frakp = 1}} \nu_\frakp(f)\cdot \frakp
\end{equation*}
is a divisor. We call divisors of the form $\div(f)$ \textit{principal divisors}, and since $\div(f\cdot g) = \div(f) + \div(g)$, the set of principal divisors forms a subgroup in $\Div(R)$. The divisor class group of $R$, denoted $\Cl(R)$, is defined to be the quotient of $\Div(R)$ by this subgroup of principal divisors.

If all the coefficients of the terms in a divisor $D$ are nonnegative, then we say $D$ is \textit{effective} and write $D \geq 0$. Given a Weil divisor $D$, we define the \textit{divisorial ideal} of $D$ to be
\begin{equation*}
    R(D) = \{f \in K^\times ~\mid ~ \div(f) + D \geq 0\} \cup \{0\}.
\end{equation*}
Every divisorial ideal is a finitely generated, rank 1 $R$-module which satisfies Serre's condition $(S_2),$ and conversely, every rank 1 $R$-module which satisfies $(S_2)$ is isomorphic to a divisorial ideal. In particular, this means $R(D)$ is a reflexive module \cite{Hart_94}. We will be particularly interested in how divisorial ideals interact with restriction along Frobenius $F^e_*(-)$, and note here that because $F^e_*$ commutes with $\Hom(-,R)$ it also commutes with the reflexification functor $\Hom_R(\Hom_R(-,R),R) = (-)^{**}$. 

\bigskip

Recall that for a prime $P \in \Spec(R)$, the $n$th symbolic power of $P$ is defined $P^{(n)} = P^nR_P\cap R$. Divisorial ideals can be realized as the intersections of symbolic powers of primes. For a divisor $D = N_1\frakp_1 + ... + N_\ell\frakp_\ell$, 
\begin{equation}\label{eq:symbolic_powers_div}
    R(D) = R(N_1\frakp_1) \cap ... \cap R(N_\ell \frakp_\ell) = \frakp_1^{(-N_1)} \cap ... \cap \frakp_\ell^{(-N_\ell)}.
\end{equation}
Note that if $N \geq 0$ and $\frakp \in \Spec(R)$ is a prime, then
\begin{equation*}
    \frakp^{(-N)} := \{f \in K ~ \mid ~ \nu_\frakp(f) \geq -N\} \cup \{0\}.
\end{equation*}
This means $P^{(-N)}$ consists only of elements in $k$ which have at most an $N$th power of $\pi_P$ in their denominator. We use the following lemma in the proof of Proposition \ref{prop:divisor_arithmetic} (c), and its proof is included for convenience. 

\begin{lemma}\label{lem:1-dim_reg_local_ring_iso}
    Suppose $(R,\frakm)$ is a local principal ideal domain of prime characteristic $p > 0$. Denote by $\langle\pi\rangle$ the maximal ideal $\frakm$. Then for any integers $n,m \in R$, 
    $$F^e_*\langle \pi^n\rangle \otimes_R \langle \pi^m \rangle \cong F^e_*\langle \pi^{n+mp^e}\rangle$$
    via the isomorphism $\varphi: F^e_*x \otimes y \mapsto F^e_* (xy^{p^e})$.
\end{lemma}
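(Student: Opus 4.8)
\medskip
\noindent\textbf{Proof sketch.}
The plan is to reduce the statement to the canonical isomorphism $N\otimes_R R\cong N$ for the module $N=F^e_*R$, exploiting that over a discrete valuation ring every fractional ideal $\langle\pi^i\rangle$ is trivialized by multiplication by $\pi^i$. Concretely, since $R$ is a principal ideal domain with $\frakm=\langle\pi\rangle$, for each integer $i$ the fractional ideal $\langle\pi^i\rangle=\pi^iR\subseteq K$ is free of rank one on the basis $\pi^i$, and multiplication by $\pi^i$ is an $R$-linear isomorphism $\mu_i\colon R\xrightarrow{\ \sim\ }\langle\pi^i\rangle$. Applying the exact functor $F^e_*$ gives an $R$-linear isomorphism $F^e_*\mu_i\colon F^e_*R\xrightarrow{\ \sim\ }F^e_*\langle\pi^i\rangle$, $F^e_*a\mapsto F^e_*(\pi^i a)$; $R$-linearity is immediate from the twisted action, since for $r\in R$ one has $r\cdot F^e_*a=F^e_*(r^{p^e}a)\mapsto F^e_*(\pi^i r^{p^e}a)=r\cdot F^e_*(\pi^i a)$.

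Next I would verify that the stated formula actually defines an $R$-linear map $\varphi$. If $x\in\langle\pi^n\rangle$ and $y\in\langle\pi^m\rangle$ then $xy^{p^e}\in\langle\pi^{n+mp^e}\rangle$, so the assignment $(F^e_*x,y)\mapsto F^e_*(xy^{p^e})$ takes values in the correct module; it is plainly biadditive, and it is $R$-balanced because
\[
\varphi\bigl((r\cdot F^e_*x)\otimes y\bigr)=F^e_*\bigl(r^{p^e}xy^{p^e}\bigr)=F^e_*\bigl(x(ry)^{p^e}\bigr)=\varphi\bigl(F^e_*x\otimes(ry)\bigr),
\]
using that Frobenius is a ring map. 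By the universal property of $\otimes_R$ this produces $\varphi$, and the same computation---with $r\cdot F^e_*x=F^e_*(r^{p^e}x)$ on the source and $r\cdot F^e_*(xy^{p^e})=F^e_*(r^{p^e}xy^{p^e})$ on the target---shows $\varphi$ is $R$-linear.

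To conclude that $\varphi$ is an isomorphism I would fit it into the square
\[
\begin{array}{ccc}
F^e_*R\otimes_R R & \longrightarrow & F^e_*R\\
\downarrow & & \downarrow\\
F^e_*\langle\pi^n\rangle\otimes_R\langle\pi^m\rangle & \xrightarrow{\ \varphi\ } & F^e_*\langle\pi^{n+mp^e}\rangle
\end{array}
\]
in which the top arrow is the canonical isomorphism $F^e_*a\otimes b\mapsto b\cdot F^e_*a=F^e_*(ab^{p^e})$, the left arrow is $F^e_*\mu_n\otimes\mu_m$, and the right arrow is $F^e_*\mu_{n+mp^e}$. Tracing $F^e_*a\otimes b$ through the square in both directions yields $F^e_*(\pi^{n+mp^e}ab^{p^e})$ either way, so the square commutes; since three of its four arrows are isomorphisms, so is $\varphi$.

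The only step with genuine content---and the one most prone to sign errors---is this last verification that $\varphi$, transported through the trivializations $F^e_*\mu_i$ and $\mu_i$, becomes the canonical isomorphism, i.e.\ that the square commutes; keeping straight which tensor factor carries the $p^e$-th power is the whole difficulty. A secondary point needing attention is the case $n<0$ or $m<0$, where $\langle\pi^n\rangle$ must be read as the fractional ideal $\pi^nR\subseteq K$ rather than as an ideal of $R$; once that convention is fixed, every step above is uniform in the signs of $n$ and $m$. (Alternatively, one can bypass the diagram chase by writing down the inverse explicitly as $F^e_*z\mapsto F^e_*(\pi^{-mp^e}z)\otimes\pi^m$ and checking directly that both composites with $\varphi$ equal the identity.)
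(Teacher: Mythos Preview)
Your argument is correct. The paper takes a slightly different tack: after checking that $\varphi$ is $R$-linear (as you do), it writes down an inverse $\psi$ directly by observing that every element of $\langle\pi^{n+mp^e}\rangle$ factors as $xy^{p^e}$ with $x\in\langle\pi^n\rangle$, $y\in\langle\pi^m\rangle$, setting $\psi(F^e_*(xy^{p^e}))=F^e_*x\otimes y$, and then checking the two composites are identities. Your main route---trivializing each fractional ideal by the isomorphism $\mu_i\colon R\xrightarrow{\sim}\langle\pi^i\rangle$ and fitting $\varphi$ into a commuting square whose other three arrows are visibly isomorphisms---is cleaner, because it sidesteps the well-definedness question that the paper's $\psi$ leaves implicit (the factorization $z=xy^{p^e}$ is not unique, so one would otherwise have to argue that $F^e_*x\otimes y$ is independent of the choice). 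Your parenthetical alternative, the explicit inverse $F^e_*z\mapsto F^e_*(\pi^{-mp^e}z)\otimes\pi^m$, is essentially the paper's $\psi$ with a canonical choice of factorization baked in, which makes well-definedness automatic; either of your two approaches is therefore a mild improvement on the paper's presentation.
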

\begin{proof}
    We first establish that this map is a $R$-module homomorphism. It is $R$-multiplicative: if $r \in R$, $x \in \langle \pi^n\rangle$ and $y \in \langle \pi^m\rangle$, then
    \begin{align*}
        \varphi\big(r\cdot (F^e_* x \otimes_R y)\big) 
        &= \varphi(F^e_*r^{p^e}x \otimes_R y) \\
        &= F^e_*(r^{p^e}xy^{p^e}) \\
        &= r\cdot F^e_*(xy^{p^e}) = r\cdot \varphi(F^e_*x \otimes_R y),
    \end{align*}
    and by extending additively to arbitrary tensors we have that $\varphi$ is $R$-linear. To see that it is an isomorphism, we define a map
    \begin{equation*}
        \psi:F^e_*\langle \pi^{n + mp^e}\rangle \rightarrow F^e_*\langle \pi^n\rangle \otimes_R \langle \pi^m \rangle, ~ F^e_*(xy^{p^e}) \mapsto F^e_* x \otimes_R y
    \end{equation*}
    Every element of $\langle \pi^{n + mp^e}\rangle = \langle\pi^{mp^e}\cdot \pi^{n}\rangle = \langle \pi^{n}\rangle \cdot \langle \pi^{m}\rangle^{p^e}$ may be realized as a product $x \cdot y^{p^e}$ where $x \in \langle \pi^n\rangle$ and $y \in \langle \pi^m \rangle$, so this map is well-defined and is easily seen to be a morphism of $R$-modules. We then have
    \begin{align*}
        \varphi\circ \psi (F^e_*(xy^{p^e})) = \varphi(F^e_*x \otimes_R y) = F^e_*(xy^{p^e})
    \end{align*}
    and
    \begin{align*}
        \psi \circ \varphi (F^e_*x \otimes_R y) = \psi(F^e_*(xy^{p^e})) = F^e_*x \otimes_R y,
    \end{align*}
    so we conclude that $\varphi$ is an isomorphism.
\end{proof}
We now proceed to the following proposition, which provides a means of manipulating expressions involving tensor products, reflexifications, and scalar-restrictions of divisorial ideals.

\begin{prop}\label{prop:divisor_arithmetic}
    Suppose $(R,\frakm, k)$ is a Noetherian normal domain of prime characteristic $p > 0$. Let $D_1$ and $D_2$ be Weil divisors, and note that $M^{**} = \Hom_R(\Hom_R(M,R),R)$. The following are true:
    \begin{enumerate}[(a)]
        \item $\Hom_R(R(D_1),R(D_2)) \cong R(D_2 - D_1)$
        \item $(R(D_1) \otimes R(D_2))^{**} \cong R(D_1 + D_2)$
        \item $(F^e_*R(D_1) \otimes_R R(D_2))^{**} \cong F^e_*R(D_1 + p^eD_2)$.
    \end{enumerate}
\end{prop}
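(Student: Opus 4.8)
The plan is to prove the three statements in sequence, using each to bootstrap the next, and in all three cases to reduce a global statement about reflexive modules on $\Spec(R)$ to a local computation at height $1$ primes, invoking the fact that a reflexive (equivalently $(S_2)$, rank $1$) module is determined by its localizations in codimension $1$.

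For part (a): both sides are rank $1$ reflexive modules, since $\Hom_R(R(D_1),R(D_2))$ is a $\Hom$ into a reflexive module and hence reflexive. To identify them it suffices to produce a natural map and check it is an isomorphism at every height $1$ prime $\frakp$ (the locus where both modules are $(S_2)$ forces agreement). Concretely, a homomorphism $R(D_1)\to R(D_2)$ is multiplication by an element of $K$, so $\Hom_R(R(D_1),R(D_2)) \cong \{f \in K : f\cdot R(D_1) \subseteq R(D_2)\}$ as submodules of $K$; unwinding the valuation conditions $\div(g) + D_1 \ge 0 \Rightarrow \div(fg) + D_2 \ge 0$ shows this set is exactly $\{f : \div(f) + D_2 - D_1 \ge 0\} = R(D_2 - D_1)$. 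Part (b) follows formally from (a): by definition $(R(D_1)\otimes R(D_2))^{**} = \Hom_R(\Hom_R(R(D_1)\otimes R(D_2),R),R)$, and $\Hom_R(R(D_1)\otimes R(D_2),R) \cong \Hom_R(R(D_1),\Hom_R(R(D_2),R)) \cong \Hom_R(R(D_1),R(-D_2)) \cong R(-D_2-D_1)$ by (a) and tensor-hom adjunction; applying $\Hom_R(-,R)$ once more and using (a) again gives $R(0-(-D_1-D_2)) = R(D_1+D_2)$.

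For part (c), which is the real content, the idea is the same reduction to codimension $1$, but now one must also track the Frobenius pushforward. Since $F^e_*$ is exact and commutes with $\Hom_R(-,R)$, it commutes with $(-)^{**}$, so $(F^e_*R(D_1)\otimes_R R(D_2))^{**}$ is reflexive and it suffices to check the claimed isomorphism after localizing at each height $1$ prime $\frakp$. Localization commutes with $F^e_*$, with $\otimes$, and with reflexification of finitely generated modules, and $R(D)_\frakp \cong R_\frakp(D|_\frakp)$ depends only on the coefficient $N = -\nu_\frakp(\text{of a local generator})$; thus the statement at $\frakp$ becomes exactly the assertion of Lemma \ref{lem:1-dim_reg_local_ring_iso} over the discrete valuation ring $R_\frakp$ — namely $F^e_*\langle\pi^{n}\rangle \otimes_{R_\frakp} \langle\pi^{m}\rangle \cong F^e_*\langle\pi^{n+mp^e}\rangle$ — once one notes that over a DVR every finitely generated rank $1$ torsion-free module is already reflexive, so the $(-)^{**}$ on the left disappears after localizing and the coefficient of $D_1+p^eD_2$ at $\frakp$ is indeed $n + p^e m$. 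Finally, to upgrade the collection of local isomorphisms to a global one, I would either exhibit a global natural map $F^e_*R(D_1)\otimes_R R(D_2) \to F^e_*R(D_1+p^eD_2)$ — e.g. $F^e_*x\otimes y \mapsto F^e_*(xy^{p^e})$ viewed inside $F^e_*K$ — apply $(-)^{**}$, and invoke that a map of reflexive modules which is an isomorphism in codimension $1$ is an isomorphism; or appeal directly to the standard fact that the reflexive hull is the intersection of localizations at height $1$ primes.

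The main obstacle is the global-to-local bookkeeping in (c): one must be careful that reflexification, Frobenius pushforward, tensor product, and localization all commute in the way claimed (the only subtle one is that $(M\otimes N)^{**}$ localizes correctly, which holds because $M,N$ are finitely generated over a Noetherian ring), and that the explicit map $F^e_*x\otimes y\mapsto F^e_*(xy^{p^e})$ is well-defined globally and recovers, upon localization, precisely the map $\varphi$ of Lemma \ref{lem:1-dim_reg_local_ring_iso}. Everything else is the routine valuation-theoretic manipulation already rehearsed in the preliminary discussion of $R(D)$ via symbolic powers in \eqref{eq:symbolic_powers_div}.
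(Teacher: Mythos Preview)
Your proposal is correct and follows essentially the same route as the paper: part (a) is done by identifying $\Hom_R(R(D_1),R(D_2))$ with multiplication by elements of $K$ and reading off the valuation conditions, part (b) by tensor--hom adjunction together with (a), and part (c) by writing down the global map $F^e_*x\otimes y\mapsto F^e_*(xy^{p^e})$, localizing at height~$1$ primes to reduce to Lemma~\ref{lem:1-dim_reg_local_ring_iso}, and then reflexifying. The only cosmetic difference is that the paper cites \cite[Theorem~1.12]{Hart_94} for the step ``isomorphism in codimension~$1$ $\Rightarrow$ isomorphism after $(-)^{**}$'', whereas you invoke this as a standard fact.
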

\begin{proof}
    We first prove $(a)$. Suppose $f \in R(D_2 - D_1)$, and define a map $\varphi_f: R(D_1) \rightarrow K^\times$ by $g \mapsto f\cdot g$. Since $f \in R(D_2 - D_1)$, $\div(f) + D_2 \geq D_1$, and so for any $g \in R(D_1)$,
    \begin{equation*}
        \div(f\cdot g) + D_2 = \div(f) + \div(g) + D_2 \geq \div(g) + D_1 \geq 0,
    \end{equation*}
    hence $\varphi_f(g) = f\cdot g \in R(D_2)$. Each $f \in R(D_2 - D_1)$ therefore defines a map $\varphi_f: R(D_1) \rightarrow R(D_2)$, so $R(D_2 - D_1) \subseteq \Hom_R(R(D_1),R(D_2))$.
    
    Now fix a map $\varphi \in \Hom_R(R(D_1),R(D_2))$. Each divisorial ideal $R(D)$ is rank 1, so tensoring $\varphi: R(D_1) \rightarrow R(D_2)$ gives us a commutative diagram
    \begin{center}
        \begin{tikzcd}
            R(D_1) \arrow[r,"\varphi"] \arrow[d] & R(D_2) \arrow[d] \\
            K \cong R(D_1)\otimes_R K \arrow[r,"\varphi'"] & R(D_2)\otimes_R K \cong K.
        \end{tikzcd}
    \end{center}
    The map $\varphi'$ is linear as a map of $k$-vector spaces, so there is some element $f \in k$ such that $\varphi'(x) = xf$ for every $x \in k$. Tracing through the diagram and using the fact that each divisorial ideal is a submodule of $k$, we realize $\varphi(x) = xf$ as well. This means $R(D_1 - \div(f)) = f\cdot R(D_1) \subseteq R(D_2)$, so $D_1 - \div(f) \leq D_2 \implies D_2 - D_1 + \div(f) \geq 0$, giving us the second inclusion.
    
    \bigskip
    
    Given $(a)$, the proof of $(b)$ follows from the fact that $\Hom(M,-)$ and $- ~ \otimes ~ M$ form an adjoint pair, i.e. that $\Hom(A \otimes B, C) = \Hom(A, \Hom(B,C))$. Indeed, 
    \begin{align*}
        \Hom_R\big(\Hom_R(R(D_1)\otimes R(D_2),R),R\big) &\cong \Hom_R\big(\Hom_R(R(D_1),\Hom(R(D_2),R)), R\big) \\
        &\cong \Hom_R\big(\Hom_R(R(D_1),R(-D_2)),R\big) \\
        &\cong \Hom_R\big(R(-(D_2 + D_1)), R\big) \\
        &\cong R(D_1+D_2).
    \end{align*}
    
    \noindent
    To prove $(c)$, for two divisors $D_1$ and $D_2$ we first notice that the map
    \begin{equation*}
        \varphi: F^e_*R(D_1) \otimes_R R(D_2) \rightarrow F^e_*R(D_1 + p^eD_2), \hspace{1em} F^e_*x \otimes y \mapsto F^e_*(x\cdot y^{p^e})
    \end{equation*}
    is a homomorphism. Indeed, if $x \in R(D_1)$ and $y \in R(D_2)$, then 
    \begin{equation*}
        \div(x\cdot y^{p^e}) + D_1 + p^eD_2 = \div(x) + D_1 + p^e(\div(y) + D_2)) \geq 0,
    \end{equation*}
    so $F^e_*x \otimes y$ lands in $F^e_*R(D_1 + p^eD_2)$. It's $R$-multiplicative: taking $r \in R$, we see
    \begin{equation*}
        \varphi\big(r\cdot (F^e_*x \otimes y)\big) = \varphi\big(F^e_*x \otimes r\cdot y\big) = F^e_*(x\cdot r^{p^e} y^{p^e}) = r\cdot F^e_*(x\cdot y^{p^e}) = r \cdot \varphi\big(F^e_*x \otimes y\big),
    \end{equation*}
    and by extending additive to arbitrary tensors we have that $\varphi$ is $R$-linear. By localizing at some height 1 prime $\frakp \in \Spec(R)$, we get a map
    \begin{equation*}
        \varphi_{\frakp}: F^e_*R(D_1)_\frakp \otimes_{R_\frakp} R(D_2)_\frakp \rightarrow F^e_* R(D_1+p^e D_2)_\frakp 
    \end{equation*}
    where we have taken advantage of the fact $(F^e_*R(D_1) \otimes_R R(D_2))_\frakp \cong F^e_*R(D_1)_\frakp \otimes_{R_\frakp} R(D_2)_\frakp$. We claim $\varphi_\frakp$ is an isomorphism.
    
    Let $n\frakp$ and $m\frakp$ be the components of $\frakp$ in $D_1$ and $D_2$ respectively, where $n$ and $m$ are integers. Because $\frakp$ is height 1, we see $R(D_1)_\frakp \cong \frakp^{-n}R_\frakp \cong \langle \pi^{-n}\rangle$ and $R(D_2)_\frakp \cong \frakp^{-m}R_\frakp \cong \langle \pi^{-m}\rangle$, where $\langle\pi\rangle$ is the maximal ideal $\frakp R_\frakp$ in $R_\frakp$. After localization and composition with the above isomorphisms, the map $\varphi_\frakp$ is defined
    \begin{equation*}
        \varphi_\frakp: F^e_*\langle \pi^{-n}\rangle \otimes_R \langle \pi^{-m} \rangle \rightarrow F^e_*\langle \pi^{-n-mp^e}\rangle, ~ F^e_*x\otimes y \mapsto F^e_* (xy^{p^e}),
    \end{equation*}
    and applying Lemma \ref{lem:1-dim_reg_local_ring_iso} tells us it is an isomorphism.
    
    \bigskip
    
    Since $\frakp$ was chosen arbitrarily, $\varphi$ is an isomorphism after localizing at \textit{any} height $1$ prime. Thus, since $\varphi$ is an isomorphism at the level of height 1 primes, by reflexifying, we see that 
    \begin{equation*}
        (F^e_*R(D_1) \otimes_R R(D_2))^{**} \xrightarrow{\varphi^{**}} \big(F^e_*R(D_1 + p^eD_2)\big)^{**}
    \end{equation*}
    is an isomorphism by \cite[Theorem 1.12]{Hart_94}. Since reflexification commutes with $F^e_*(-)$ and every divisorial ideal is reflexive, $F^e_*R(D_1 + p^eD_2)$ is reflexive as well. This gives us
    \begin{equation*}
        (F^e_*R(D_1) \otimes_R R(D_2))^{**} \cong \big(F^e_*R(D_1 + p^eD_2)\big)^{**} \cong F^e_*R(D_1 + p^eD_2)
    \end{equation*}
    as desired.
\end{proof}

\bigskip

\subsection{Strongly \textit{F}-regular rings}

We now present a refinement of  \cite[Corollary 2.2]{polstra2020theorem}, stated as Lemma \ref{lem:refinement_cor2.2}, which features the same techniques employed by Polstra. We state \cite[Lemma 2.1]{polstra2020theorem} for convenience.
\begin{lemma}[{\cite[Lemma 2.1]{polstra2020theorem}}]\label{lem:pol_lem_2.1}
    Let $(R,\frakm,k)$ be a local normal domain. Let $C$ be a finitely generated $(S_2)$-module, M a rank 1 module, and suppose that $C \cong M^{\oplus a_1} \oplus N_1 \cong M^{\oplus a_2}\oplus N_2$ are choices of direct sum decompositions of $C$ so that $M$ cannot be realized as a direct summand of either $N_1$ or $N_2$. Then $a_1 = a_2$.
\end{lemma}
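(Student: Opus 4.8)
The plan is to extract $a_1=a_2$ directly from an isomorphism $\phi\colon M^{\oplus a_1}\oplus N_1\xrightarrow{\ \sim\ }M^{\oplus a_2}\oplus N_2$ by writing it in block form, the one structural input being that $\Hom_R(M,M)$ is a \emph{local} ring — in fact equal to $R$. First I would reduce to the case where $M$ is a divisorial ideal: since $M$ is a direct summand of $C$ and the depth of a direct summand is at least the depth of the ambient module, $M$ is itself $(S_2)$, and being of rank $1$ it is therefore isomorphic to a divisorial ideal $R(D)$. Proposition~\ref{prop:divisor_arithmetic}~(a) then gives $\Hom_R(M,M)\cong\Hom_R(R(D),R(D))\cong R(D-D)=R$, so $\Hom_R(M^{\oplus a},M^{\oplus b})$ is the module of $b\times a$ matrices over $R$ with composition corresponding to matrix multiplication. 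I would also record the key consequence of the hypothesis that $M$ is not a summand of $N_i$: any composite $M\xrightarrow{g}N_i\xrightarrow{h}M$, viewed as an element $u$ of $\Hom_R(M,M)=R$, must lie in $\frakm$, for otherwise $u^{-1}h\circ g=\operatorname{id}_M$ would exhibit $M$ as a direct summand of $N_i$.

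Next I would run the block-matrix computation. Write $\phi=\begin{pmatrix}A&B\\ C&\ast\end{pmatrix}$ and $\phi^{-1}=\begin{pmatrix}A'&B'\\ C'&\ast\end{pmatrix}$ with respect to the two decompositions, where $A\colon M^{\oplus a_1}\to M^{\oplus a_2}$ and $A'\colon M^{\oplus a_2}\to M^{\oplus a_1}$ are honest matrices over $R$. Comparing upper-left blocks of $\phi^{-1}\phi=\operatorname{id}$ yields $A'A+B'C=I_{a_1}$, where every entry of $B'C$ is a composite $M\to N_2\to M$, hence lies in $\frakm$ by the paragraph above; therefore $A'A=I_{a_1}-B'C$ has unit determinant, is invertible over $R$, and exhibits $A$ as split injective. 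Symmetrically, the upper-left blocks of $\phi\phi^{-1}=\operatorname{id}$ give $AA'+BC'=I_{a_2}$ with the entries of $BC'$ composites $M\to N_1\to M$ lying in $\frakm$, so $AA'$ is invertible and $A$ is split surjective. A homomorphism that is simultaneously split injective and split surjective is an isomorphism, whence $M^{\oplus a_1}\cong M^{\oplus a_2}$; taking ranks over the fraction field of $R$ and using $\rank_R M=1$ forces $a_1=a_2$.

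The hard part is really the setup rather than the computation: one must be careful that the $(S_2)$ hypothesis is exactly what allows Proposition~\ref{prop:divisor_arithmetic}~(a) to apply, so that the relevant blocks of $\phi$ and $\phi^{-1}$ are genuine matrices over $R$ and $\Hom_R(M,M)$ is local. Everything after that is the elementary fact that $I-X$ is invertible over a local ring whenever $X$ has all its entries in the maximal ideal, applied twice.
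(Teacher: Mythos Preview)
The paper does not actually prove this lemma: it is stated with the citation \cite[Lemma~2.1]{polstra2020theorem} and no proof is given here, so there is no in-paper argument to compare your proposal against.

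That said, your argument is correct and self-contained. The reduction to $\Hom_R(M,M)\cong R$ via the $(S_2)$ hypothesis and Proposition~\ref{prop:divisor_arithmetic}(a) is exactly the structural input needed, and the observation that any composite $M\to N_i\to M$ lands in $\frakm$ (else $M$ would split off $N_i$) is the right translation of the hypothesis. The block-matrix computation is standard: from $A'A\equiv I_{a_1}\pmod{\frakm}$ and $AA'\equiv I_{a_2}\pmod{\frakm}$ one gets that $A$ has both a left and a right inverse over $R$, hence is an isomorphism of free $R$-modules, forcing $a_1=a_2$. One minor stylistic point: once $A'A$ and $AA'$ are both invertible you can conclude directly that $A$ is an isomorphism (left inverse equals right inverse), so the phrasing ``split injective and split surjective'' is a slight detour, though not incorrect.
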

\begin{lemma}\label{lem:refinement_cor2.2}
    Let $(R,\frakm,k)$ be a local normal domain and $C$ a finitely generated $(S_2)$-module. If $D_1,...,D_t$ are divisors representing distinct elements of the divisor class group and $R(D_i)$ is a direct summand of $C$ for each $1\leq i\leq t$, then 
    $$R(D_1)^{a^{R(D_1)}(C)} \oplus \hdots \oplus R(D_t)^{a^{R(D_t)}(C)}$$ 
    is a direct summand of $C$.
\end{lemma}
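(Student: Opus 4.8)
The plan is to argue by induction on $t$, powered by two observations about how divisorial ideals split off as direct summands. The first is a \emph{splitting principle}: if $C = N \oplus C'$ and $R(D)$ is a direct summand of $C$ but not of $N$, then $R(D)$ is already a direct summand of $C'$. To prove it, split the identity: given $\iota\colon R(D)\to C$ and $\pi\colon C\to R(D)$ with $\pi\iota = \mathrm{id}$, decompose along $C = N\oplus C'$ to obtain $\pi_N\iota_N + \pi_{C'}\iota_{C'} = \mathrm{id}_{R(D)}$ inside $\Hom_R(R(D),R(D))$. By Proposition \ref{prop:divisor_arithmetic}(a) (with $D_1 = D_2 = D$) this ring is $R(0) = R$, which is local, so one of the two summands is a unit; it cannot be $\pi_N\iota_N$, since inverting that unit would realize $R(D)$ as a summand of $N$, so $\pi_{C'}\iota_{C'}$ is a unit and $R(D)$ splits off $C'$.

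The second observation, which I expect to be the real content, is a \emph{class-separation principle}: if $D$ is not linearly equivalent to any of $E_1,\dots,E_s$, then $R(D)$ is not a direct summand of $R(E_1)\oplus\cdots\oplus R(E_s)$. Indeed, a splitting gives component maps which, by Proposition \ref{prop:divisor_arithmetic}(a) and the realization of divisorial ideals as submodules of the fraction field $K$, are multiplication by elements $f_k \in R(E_k - D)$ and $g_k \in R(D - E_k)$ with $\sum_k g_k f_k = 1$ in $R$. Since $R$ is local, some product $g_{k_0}f_{k_0}$ is a unit; but $f_{k_0}\in R(E_{k_0}-D)$ and $g_{k_0}\in R(D-E_{k_0})$ give $\div(f_{k_0}) \geq D - E_{k_0}$ and $\div(g_{k_0}) \geq E_{k_0} - D$, and since $\div(f_{k_0}g_{k_0}) = 0$ these inequalities collapse to $\div(f_{k_0}) = D - E_{k_0}$. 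Then $D - E_{k_0}$ is principal, i.e. $D$ is linearly equivalent to $E_{k_0}$, a contradiction.

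With these in hand the induction runs smoothly. The base case $t=1$ amounts to noting that $a^{R(D_1)}(C)$ is finite — bounded by $\rank_R C$, since each copy of $R(D_1)$ contributes rank one — and is attained in a decomposition $C = R(D_1)^{a^{R(D_1)}(C)}\oplus C''$ in which, by maximality, $R(D_1)$ is not a summand of $C''$. For the inductive step, apply the hypothesis to $D_1,\dots,D_{t-1}$ and $C$ to write $C = N\oplus C'$ with $N = \bigoplus_{i<t}R(D_i)^{a^{R(D_i)}(C)}$. Since $R(D_t)$ is a summand of $C$ but, by class-separation, not of $N$, the splitting principle places a copy of $R(D_t)$ in $C'$; pull out the largest such power, $C' = R(D_t)^{b}\oplus C''$ with $b = a^{R(D_t)}(C')$ and $R(D_t)$ not a summand of $C''$. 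Class-separation and the splitting principle together show $R(D_t)$ is not a summand of $N\oplus C''$, so applying Lemma \ref{lem:pol_lem_2.1} with $M = R(D_t)$ to the decomposition $C = R(D_t)^{b}\oplus(N\oplus C'')$ and to a decomposition $C = R(D_t)^{a^{R(D_t)}(C)}\oplus\tilde C$ realizing the maximum (so $R(D_t)$ is not a summand of $\tilde C$) forces $b = a^{R(D_t)}(C)$. Therefore $C = \bigoplus_{i\leq t}R(D_i)^{a^{R(D_i)}(C)}\oplus C''$, which is exactly the assertion.

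I expect the class-separation principle to be the main obstacle: it is the one place where genuine divisor theory enters — the $\Hom$-computation of Proposition \ref{prop:divisor_arithmetic}(a) together with the fact that in a local ring a sum of non-units is a non-unit — rather than formal module manipulation. A secondary subtlety, handled by Lemma \ref{lem:pol_lem_2.1}, is certifying that the power of $R(D_t)$ extracted from the complement $C'$ is not merely some positive integer but precisely the global invariant $a^{R(D_t)}(C)$; without the class-separation and splitting facts the two decompositions being compared need not have $R(D_t)$-free complements, and the argument would stall.
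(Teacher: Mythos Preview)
Your proof is correct and rests on the same core idea as the paper's: use Proposition~\ref{prop:divisor_arithmetic}(a) together with locality of $R$ to show that $R(D_i)$ cannot split off from a direct sum of non-equivalent divisorial ideals, then invoke Lemma~\ref{lem:pol_lem_2.1} to pin down the exponent. The organization differs. You isolate two reusable principles (your splitting and class-separation lemmas) and run an induction on $t$, whereas the paper starts from a decomposition with each $n_i \geq 1$ (borrowed from \cite[Corollary~2.2]{polstra2020theorem}) and refines it one index at a time, applying $\Hom_R(-,R(D_i))$ to the entire direct sum and counting free summands to reach the same contradiction. Your explicit splitting principle (via $\End_R(R(D)) \cong R$ local) makes the passage from ``$R(D_t)$ is not a summand of $N$'' to ``$R(D_t)$ is not a summand of $N\oplus C''$'' completely transparent---a step the paper's proof glosses over when it jumps from the class-separation argument on $\bigoplus_j R(D_j)^{n_j}$ to the conclusion about $N$. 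In exchange, the paper's refinement argument treats all $t$ indices at once rather than inductively, and its $\Hom(-,R(D_i))$ trick gives a slightly slicker route to the contradiction (surject onto $R^{n_i+1}$, quotient, find a free rank-one divisorial ideal) than your elementwise valuation computation, though both encode the same content.
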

\begin{proof}
    Suppose we have found a decomposition
    \begin{equation}\label{eq:initial_decomp}
        C \cong R(D_1)^{n_1} \oplus ... \oplus R(D_t)^{n_t} \oplus N
    \end{equation}
    where $n_i$ is a positive integer for $1\leq i \leq t$. One such decomposition is given by \cite[Corollary 2.2]{polstra2020theorem}, in which each $n_i = 1$. Fix $i \in \{1,...,n\}$. We prove that if $n_i < a^{R(D_i)}(C)$ then $R(D_i)$ must necessarily be a summand of $N$, and in this way, refine $N$ until $$C \cong R(D_1)^{a^{R(D_1)}(C)} \oplus \hdots \oplus R(D_t)^{a^{R(D_t)}(C)} \oplus N.$$
    
    There exists a decomposition of $C$ such that $C \cong R(D_i)^{a^{R(D_i)}(C)} \oplus P$ by definition; hence, it suffices to show that $R(D_i)$ is not a summand of $R(D_1)^{n_1}\oplus...\oplus R(D_t)^{n_t}$ by Lemma \ref{lem:pol_lem_2.1}. We proceed by contradiction and assume instead that $R(D_1)^{n_1}\oplus...\oplus R(D_t)^{n_t}$ has $n_i + 1$ many $R(D_i)$ summands, that is, the $n_i$ summands already present in addition to one extra. Passing our decomposition through $\Hom_R(-,R(D_i))$ and applying Proposition \ref{prop:divisor_arithmetic} means
    \begin{equation*}
        R(D_i - D_1)^{n_1} \oplus ... \oplus R^{\oplus n_i}\oplus ... \oplus R(D_i - D_t)^{n_t}
    \end{equation*}
    necessarily has $n_i+1$ $R$ summands. There then exists a surjective $R$-linear map 
    \begin{equation*}
        R(D_i - D_1)^{n_1} \oplus ... \oplus R^{n_i}\oplus ... \oplus R(D_i - D_t)^{n_t} \rightarrow R^{n_i+1}.
    \end{equation*}
    Quotienting by $R^{\oplus n_i}$ induces a map 
    \begin{equation*}
        \bigoplus_{1 \leq j \leq t, \\ j \neq j} R(D_i - D_j)^{n_j} \rightarrow R,
    \end{equation*}
    and by the locality of $R$ there must be some $j$ not equal to $i$ such that the image of $R(D_i - D_j)$ contains a unit. This means $R(D_i - D_j)$ must have rank free rank 1, but because every divisorial ideal has rank 1, $R(D_i - D_j) \cong R$ as $R$-modules. Thus, $D_i$ and $D_j$ are linearly equivalent, a contradiction. It must then be the case that $R(D_i)$ is a summand of $N$ by Lemma \ref{lem:pol_lem_2.1}.
\end{proof}

It is known that the divisorial ideals of torsion divisors in strongly $F$-regular rings are maximal Cohen-Macaulay modules due to \cite{Patakfalvi_2013} and \cite{dao2016finite}, but we present a novel proof here. If $M$ is a finitely generated module over a local ring $(R,\frakm, k)$ of prime characteristic $p$ and $e \in \bN$, then we let
\begin{equation*}
    I_e(M) = \left\{ \eta \in M \midd \varphi(F^e_* \eta) \in \frakm, \forall \varphi \in \Hom_R(F^e_*M, R) \right\}.
\end{equation*}

\begin{lemma}\label{lem:tor-free-then-free-sum}
    Let $(R,\frakm,k)$ be an $F$-finite strongly $F$-regular ring and $M_i$ a finitely generated torsion free $R$-module for $1 \leq i \leq n$. Then there exists an $e_0 \in \bN$ such that $F_*^{e} M_i$ has a free summand for all $1 \leq i \leq n$ and $e > e_0$.
\end{lemma}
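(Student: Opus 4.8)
The plan is to reduce to a single nonzero torsion free module and, for all $e\gg 0$, to exhibit an element of $M$ outside $I_e(M)$. The reduction rests on the observation that for a finitely generated $R$-module $M$ the module $F^e_*M$ has a free direct summand exactly when $I_e(M)\neq M$: any $\eta\in M\setminus I_e(M)$ admits $\varphi\in\Hom_R(F^e_*M,R)$ with $\varphi(F^e_*\eta)$ a unit, so after rescaling $\varphi$ the submodule $R\cdot F^e_*\eta\cong R$ is a free summand split off by $\varphi$; the converse is immediate. Hence it suffices to find, for each $i$, an integer $e_i$ with $I_e(M_i)\neq M_i$ for all $e\geq e_i$, and then to take $e_0:=\max_{1\le i\le n}e_i$. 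In particular I may assume $n=1$ and write $M$ for the single module; I may assume $M\neq 0$, since otherwise $F^e_*M=0$ has no free summand (so the statement tacitly requires $M\neq0$).

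Next I would produce a nonzero functional on $M$. Since $M$ is torsion free and nonzero, $\Hom_R(M,R)\neq 0$: for instance $M$ embeds into $M^{**}=\Hom_R(\Hom_R(M,R),R)$, so $\Hom_R(M,R)=0$ would force $M=0$ (alternatively, compose $M\hookrightarrow M\otimes_R K\cong K^{\oplus\rank M}$ with a coordinate projection and clear denominators). Fix a nonzero $h\in\Hom_R(M,R)$ and choose $m_0\in M$ with $c:=h(m_0)\neq 0$. Applying strong $F$-regularity to the nonzero element $c$ yields $e_1\in\bN$ such that for every $e\geq e_1$ there is $\varphi_e\in\Hom_R(F^e_*R,R)$ with $\varphi_e(F^e_*c)=1$. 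For such $e$, set $\psi_e:=\varphi_e\circ F^e_*h\in\Hom_R(F^e_*M,R)$; then
\[
\psi_e(F^e_*m_0)=\varphi_e\!\big(F^e_*(h(m_0))\big)=\varphi_e(F^e_*c)=1\notin\frakm,
\]
so $m_0\notin I_e(M)$, i.e.\ $I_e(M)\neq M$, for all $e\geq e_1$ --- exactly what was needed.

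The step that needs care --- and the main obstacle --- is the assertion that strong $F$-regularity yields a splitting of $1\mapsto F^e_*c$ for \emph{all} $e\gg0$, not merely for one; equivalently, that $I_{e+1}(R)\subseteq I_e(R)$, so that $c\notin I_{e_1}(R)$ propagates to $c\notin I_e(R)$ for every $e\geq e_1$. This reduces to two elementary facts: (i) a strongly $F$-regular ring is $F$-split, because the map $R\to F^e_*R$, $1\mapsto F^e_*1$, factors through $R\to F_*R$, $1\mapsto F_*1$, so any splitting of the former restricts to a splitting of the latter; and (ii) if $\varphi\colon F^e_*R\to R$ satisfies $\varphi(F^e_*c)=1$ and $\theta\colon F_*R\to R$ satisfies $\theta(F_*1)=1$, then $\theta\circ F_*(\varphi)\colon F^{e+1}_*R\to R$ sends $F^{e+1}_*c$ to $1$. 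Everything else is formal; this ``eventual'' form of strong $F$-regularity is standard and could alternatively be cited.
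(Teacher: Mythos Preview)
Your proof is correct and follows essentially the same route as the paper: both reduce to showing $I_e(M)\neq M$ for all $e\gg 0$ by producing a single element outside $I_{e_0}(M)$ and then invoking the decreasing property $I_{e+1}\subseteq I_e$. The only difference is packaging: the paper cites \cite[Lemma~2.3(3),(4)]{polstra2020theorem} for $\bigcap_e I_e(M)=0$ and the descent of the chain $\{I_e(M)\}$, whereas you unwind these facts in place---choosing a nonzero $h\in\Hom_R(M,R)$ to transport the problem to $R$, applying strong $F$-regularity to $c=h(m_0)$, and composing with an $F$-splitting to pass from $e$ to $e+1$---which is exactly how that lemma is proved.
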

\begin{proof}
    Observe that $F^e_*M$ has a free summand exactly when there is some $\varphi \in \Hom_R(F^e_*M,R)$ such that $\varphi(m) = 1$. To see this, suppose we have such a $\varphi(m) = 1$. If this is the case, then the map $\alpha: R \rightarrow M$ defined $\alpha(1) = m$ is a morphism such that $\varphi \circ \alpha = \id_R$, so the exact sequence 
    \begin{equation*}
        0 \rightarrow \ker \varphi \rightarrow M \rightarrow R \rightarrow 0
    \end{equation*}
    splits and $M \cong \ker \varphi \oplus R$.
    
    Assume that $M$ is a torsion free $R$-module. Lemma 2.3 (4) in \cite{polstra2020theorem} gives us that 
    \begin{equation}
        \bigcap_{e \in \bN} I_e(M) = 0.
    \end{equation}
    Thus, for every $0 \neq \eta \in M$, there is some $e(\eta) \in \bN$ such that $\eta \not\in I_{e(\eta)}(M)$ and therefore some $\varphi \in \Hom_R(F^{e(\eta)}_*M,R)$ such that $\varphi(\eta) \not\in \frakm$. Without loss of generality we take $\varphi(\eta) = 1$. 
    
    Now suppose $M_1,...,M_n$ are torsion free $R$-modules. For each $M_i$, choose $0 \neq \eta_i \in M_i$ and let $e(\eta_i)$ be a natural number depending on $\eta_i$ such that $\eta_i \not\in I_{e(\eta_i)}(M_i)$. Set 
    $$e_0 = \max \{e(\eta_1),...,e(\eta_n)\}.$$ By part (3) of Lemma 2.3 in \cite{polstra2020theorem}, $I_{e_0}(M_i) \subseteq I_{e(\eta_i)}(M_i)$ since $e_0 \geq e(\eta_i)$. Thus, for each $1 \leq i \leq n$ we may find a $\varphi_i \in \Hom_R(F^{e_0}_* M_i, R)$ such that $\varphi_i(\eta_i) = 1$, and conclude that $F^{e_0}_*M_i$ has a free summand for each $1 \leq i \leq n$.
\end{proof}
\begin{prop}\label{prop:tor_div_max_CM}
    Let $(R,\frakm,k)$ be an $F$-finite strongly $F$-regular ring. If $D$ is a torsion divisor, then $R(D)$ is a maximal Macaulay module.
\end{prop}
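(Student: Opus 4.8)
The plan is to induct on $\dim R$ and use the standard characterization of maximal Cohen--Macaulay modules via local cohomology (equivalently, depth): $R(D)$ is maximal Cohen--Macaulay if and only if $\operatorname{depth} R(D) = \dim R$. Since $R(D)$ is a divisorial ideal it already satisfies $(S_2)$, so by a depth count it is maximal Cohen--Macaulay on the punctured spectrum automatically (for every non-maximal prime $\mathfrak q$, $R(D)_{\mathfrak q}$ is a divisorial ideal over the strongly $F$-regular, hence normal, local ring $R_{\mathfrak q}$, and $D$ localizes to a torsion divisor there, so we may assume by induction on dimension that $R(D)_{\mathfrak q}$ is maximal Cohen--Macaulay). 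Thus the only obstruction to $R(D)$ being maximal Cohen--Macaulay is that $H^i_{\mathfrak m}(R(D))$ could be nonzero for some $i$ with $2 \le i < \dim R$; we must rule this out.

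To do this I would exploit that $D$ is torsion, say $nD \sim 0$, together with Proposition \ref{prop:divisor_arithmetic}(c). Applying that proposition repeatedly, $(F^e_* R(D_1) \otimes_R R(D))^{**} \cong F^e_* R(D_1 + p^e D)$ for any divisor $D_1$; choosing $D_1$ appropriately and iterating, the reflexive modules $R(jD)$ for $j = 0, 1, \dots, n-1$ all appear, up to a twist, as direct summands of $F^e_* R$ for $e \gg 0$. More precisely, since $R$ is strongly $F$-regular, Lemma \ref{lem:tor-free-then-free-sum} (applied to the finitely many torsion-free modules $R(-jD)$, $0 \le j \le n-1$) gives an $e_0$ so that each $F^{e}_* R(-jD)$ has a free summand for $e > e_0$; dualizing and using Proposition \ref{prop:divisor_arithmetic}(a), this says $R(jD)$ is (up to the identification of reflexive modules) a direct summand of $F^e_* R(-p^e \cdot 0) = F^e_* R$ after the appropriate bookkeeping — concretely, $\operatorname{Hom}_R(F^e_* R, R(jD))$ has a free summand, so $R(jD)$ is a summand of a reflexive module built from $F^e_* R$. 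The upshot I want is that $R(D)$ is a direct summand of $F^e_* R$ (or of a module of the form $F^e_* R(D')$ with $D'$ in the orbit of $D$) for infinitely many $e$.

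Granting that $R(D)$ is a direct summand of $F^e_* R$ for some $e$, the conclusion is immediate: $R$ is strongly $F$-regular hence Cohen--Macaulay (this is classical — strongly $F$-regular implies $F$-rational implies Cohen--Macaulay, or one can cite that a direct summand of a regular ring is Cohen--Macaulay once we reduce to that case; in general $F$-finite strongly $F$-regular rings are Cohen--Macaulay by \cite{Hochster_Huneke}), so $F^e_* R$ is a maximal Cohen--Macaulay $R$-module, and a direct summand of a maximal Cohen--Macaulay module is maximal Cohen--Macaulay. Therefore $R(D)$ is maximal Cohen--Macaulay, i.e. a maximal Cohen--Macaulay module.

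The main obstacle is the bookkeeping in the previous-to-last paragraph: Proposition \ref{prop:divisor_arithmetic}(c) produces $F^e_* R(D_1 + p^e D_2)$, and to isolate $R(D)$ itself (rather than $R(p^e D)$, which is $R$-isomorphic to $R(0)=R$ since $D$ is torsion and $n \mid p^e$ only for special $e$) one must choose the shift $D_1$ to exactly cancel the $p^e$-multiplication modulo linear equivalence — this requires $p^e \equiv 1 \pmod n$, which holds for infinitely many $e$ in an arithmetic progression by Euler's theorem provided $\gcd(p,n)=1$; when $p \mid n$ one instead writes $n = p^a m$ with $\gcd(p,m)=1$, notes $p^a D$ has order $m$, and runs the argument for $p^a D$ while handling the remaining $p$-part separately (or observes the $p$-part of the torsion subgroup is trivial in a strongly $F$-regular ring, a known fact). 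Making this arithmetic clean, and correctly tracking that "direct summand up to reflexive isomorphism" is genuinely "direct summand," is where the real work lies; everything else is formal.
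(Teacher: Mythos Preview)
Your core strategy---show that $R(D)$ is a direct summand of $F^e_*R$ for some $e$, then use that $F^e_*R$ is maximal Cohen--Macaulay---is exactly the paper's approach. The opening paragraph about induction on dimension and local cohomology is unnecessary scaffolding; once you have the direct-summand statement you are done, with no induction required.

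Where you diverge is in the ``bookkeeping obstacle'' you flag at the end. You worry about arranging $p^e \equiv 1 \pmod n$ so that $p^eD \sim D$, and propose treating the case $p \mid n$ separately. The paper sidesteps this entirely with a cleaner choice of shift. Apply Lemma~\ref{lem:tor-free-then-free-sum} to the finitely many modules $R(mD)$ for $m \in \bZ$ (finite since $D$ is torsion), obtaining a single $e$ for which every $F^e_*R(mD)$ has a free summand. In particular, since $-p^eD$ lies in this finite orbit, one may write $F^e_*R(-p^eD) \cong R \oplus M$. Now tensor with $R(D)$ and reflexify: Proposition~\ref{prop:divisor_arithmetic}(c) gives
\[
\big(F^e_*R(-p^eD)\otimes_R R(D)\big)^{**} \;\cong\; F^e_*R(-p^eD + p^eD) \;=\; F^e_*R,
\]
so $F^e_*R \cong R(D) \oplus (M\otimes_R R(D))^{**}$. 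No congruence condition on $e$ is needed, and there is nothing to dualize: the point is to start from $F^e_*R(-p^eD)$ rather than from $F^e_*R$, so that the $p^e$-multiplication built into Proposition~\ref{prop:divisor_arithmetic}(c) cancels on the nose. Your arithmetic detour through Euler's theorem and the $p$-part of the torsion subgroup is avoidable.
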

\begin{proof}
    Since $R(D) \subseteq K$, $R(D)$ is torsion free. Furthermore, since $D$ is a torsion divisor, up to linear equivalence $nD = 0$ for some $0 \neq n \in \bZ$ and the list $\{nD\}_{n \in \bZ}$ is finite. By Lemma (\ref{lem:tor-free-then-free-sum}) there is some $e \in \bN$ such that for all $n \in \bZ$, $F^e_*R(nD)$ has a free summand. This means we may write $F^e_*R(-p^eD) = R \oplus M$ for some module $M$. Tensoring with $R(D)$ and reflexifying yields
    \begin{equation*}
        F^e_*R \cong R(D) \oplus \Hom_R(\Hom_R(M\otimes_R R(D),R),R)
    \end{equation*}
    after applying Proposition \ref{prop:divisor_arithmetic}. Thus, $R(D)$ is a summand of the maximal Cohen-Macaulay $R$-module ($F^e_*R$) so we conclude that $R(D)$ is a maximal Cohen-Macaulay $R$-module.
\end{proof}

\section{Main Result}
Throughout this section, $(R,\frakm,k)$ is a local $F$-finite strongly $F$-regular ring.
\begin{lemma}\label{lem:e0_so_all_D_are_summands}
    Let $D$ be any torsion divisor. There exists an $e_0$ such that if $e \geq e_0$, then $a_e^{R(D)}(R) \geq 1$.
\end{lemma}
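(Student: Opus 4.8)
The plan is to reuse the mechanism behind Proposition \ref{prop:tor_div_max_CM}, observing that the relevant Frobenius pushforward acquires a free summand not just for a single exponent but for all sufficiently large ones. First I would record the translation: $a_e^{R(D)}(R) = a^{R(D)}(F^e_*R) \geq 1$ precisely when $R(D)$ occurs as a direct summand of $F^e_*R$. So the task reduces to producing an $e_0$ for which $R(D)$ is a summand of $F^e_*R$ whenever $e \geq e_0$.

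Next I would exploit that $D$ is torsion: the set of divisor classes $\{mD : m \in \bZ\}$ is finite, so $\{R(mD) : m \in \bZ\}$ consists of finitely many isomorphism classes of $R$-modules, each torsion free since it embeds in $K$. I would then apply Lemma \ref{lem:tor-free-then-free-sum} to this finite list to obtain an $e_0 \in \bN$ such that $F^e_* R(mD)$ has a free summand for every $m \in \bZ$ and every $e > e_0$. The crucial feature is that $e_0$ is uniform over the whole torsion orbit, so although the exponent $p^e$ appearing below grows with $e$, the class $-p^e D$ always lies in this fixed finite set and the free-summand conclusion applies.

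Finally, fixing $e > e_0$ and applying the previous step to $m = -p^e$, I would write $F^e_* R(-p^e D) \cong R \oplus M$ for some module $M$. Tensoring with $R(D)$ and reflexifying, Proposition \ref{prop:divisor_arithmetic}(c) gives
\[
F^e_* R \;\cong\; F^e_* R\big(-p^e D + p^e D\big) \;\cong\; \big(F^e_* R(-p^e D) \otimes_R R(D)\big)^{**} \;\cong\; R(D) \oplus \big(M \otimes_R R(D)\big)^{**},
\]
so $R(D)$ is a direct summand of $F^e_*R$ and hence $a_e^{R(D)}(R) \geq 1$, as desired.

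The one genuinely delicate point is the uniformity invoked in the middle paragraph: one must resist applying Lemma \ref{lem:tor-free-then-free-sum} afresh for each $e$ (which would let $e_0$ depend on $e$ and prove nothing), and instead invoke it once for the entire finite collection $\{R(mD)\}_{m \in \bZ}$ furnished by torsion-ness of $D$. Everything else is the divisor arithmetic of Proposition \ref{prop:divisor_arithmetic} combined with the splitting observation already used in the proof of Proposition \ref{prop:tor_div_max_CM}.
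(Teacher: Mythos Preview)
Your proposal is correct and is precisely the argument the paper intends: the paper's proof reads in its entirety ``Follows immediately from the proof of Proposition \ref{prop:tor_div_max_CM},'' and you have faithfully unpacked that reference --- finiteness of $\{mD : m \in \bZ\}$, a uniform $e_0$ from Lemma \ref{lem:tor-free-then-free-sum}, and the tensor-and-reflexify step via Proposition \ref{prop:divisor_arithmetic}(c). The only cosmetic discrepancy is $e > e_0$ versus $e \geq e_0$, which is absorbed by relabelling.
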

\begin{proof}
    Follows immediately from the proof of Proposition \ref{prop:tor_div_max_CM}.
\end{proof}

\begin{lemma}\label{lem:RD_summands_tend_to_Fsig}
    Let $D$ be a torsion divisor. Then 
    \begin{equation*}
        \lim_{e\rightarrow \infty}\frac{a_e^{R(D)}(R)}{\rank F^e_*R} = s(R),
    \end{equation*}
    where $s(R)$ is the $F$-signature of $R$.
\end{lemma}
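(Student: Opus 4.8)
The plan is to bound $\lim_{e}a_e^{R(D)}(R)/\rank F^e_*R$ above and below by $s(R)$, after first converting the problem into one about free ranks of twisted Frobenius pushforwards. Write $r_e:=\rank_R F^e_*R$, so that $r_{e+e'}=r_e r_{e'}$, $a_e^{R(0)}(R)=\frk F^e_*R$, and $a_e^{R(0)}(R)/r_e\to s(R)$ by definition. It is harmless to replace $R$ by its completion: $F$-finite rings are excellent, so $\widehat R$ is again a normal local domain with $\Cl(R)\cong\Cl(\widehat R)$ compatibly, and over $\widehat R$ the Krull--Schmidt theorem holds, whence free rank and the numbers $a^{\bullet}(-)$ are additive over finite direct sums and unaffected by completion. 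Next I would invoke Proposition \ref{prop:divisor_arithmetic}(c): the functor $T_E:=(-\otimes_R R(E))^{**}$ is a self-equivalence of the category of reflexive $R$-modules with inverse $T_{-E}$, carrying $R(G)\mapsto R(G+E)$ and $F^e_*R(G)\mapsto F^e_*R(G+p^eE)$, and, being an equivalence, preserving the invariants $a^{\bullet}(-)$ on reflexive modules. Applying $T_{-D}$ gives
\[
a_e^{R(D)}(R)=a^{R(D)}(F^e_*R)=a^{R}\bigl(F^e_*R(-p^eD)\bigr)=\frk F^e_*R(-p^eD).
\]
Since $D$ is torsion the classes $-p^eD$ run over a \emph{finite} set of torsion classes, so it suffices to prove that $\frk F^e_*R(E)/r_e\to s(R)$ for each fixed torsion divisor $E$ (finiteness upgrades this to the uniform statement that is actually needed).

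Fix a torsion divisor $E$. By Lemma \ref{lem:e0_so_all_D_are_summands} (or the proof of Proposition \ref{prop:tor_div_max_CM}) there is $e_1$ with $R(E)$ a direct summand of $F^{e_1}_*R$, say $F^{e_1}_*R\cong R(E)\oplus M'$ with $\rank M'=r_{e_1}-1$. Applying $F^e_*$ and taking free ranks (additive over direct sums) gives, for all $e$,
\[
\frk F^e_*R(E)+\frk F^e_*M'=\frk F^{e+e_1}_*R,
\]
and dividing by $r_e$ the right-hand side converges to $r_{e_1}\lim_e\frk F^{e+e_1}_*R/r_{e+e_1}=r_{e_1}s(R)$. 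I claim it is enough to establish the following bound for every finitely generated $R$-module $N$ that is a direct summand of some Frobenius pushforward $F^{e_1}_*R$:
\begin{equation*}
\limsup_{e\to\infty}\ \frac{\frk F^e_*N}{r_e}\ \le\ (\rank N)\,s(R).\tag{$\dagger$}
\end{equation*}
Indeed, applying $(\dagger)$ to $N=R(E)$ (rank $1$) and to $N=M'$ (rank $r_{e_1}-1$) and adding yields $r_{e_1}s(R)$, which is the limit of the right-hand side of the displayed identity; hence both $\limsup$'s must equal their respective upper bounds, and then taking $\limsup$ of $\frk F^e_*M'/r_e=\frk F^{e+e_1}_*R/r_e-\frk F^e_*R(E)/r_e$ forces $\liminf_e\frk F^e_*R(E)/r_e=r_{e_1}s(R)-(r_{e_1}-1)s(R)=s(R)$ as well, so $\frk F^e_*R(E)/r_e\to s(R)$.

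The remaining, and principal, difficulty is $(\dagger)$: no direct summand of a Frobenius pushforward of $R$ carries more free-rank density than $R$ itself. Writing $\sigma(N):=\lim_e\frk F^e_*N/r_e$ where it exists, one has $\sigma(F^{e_1}_*R)=r_{e_1}s(R)$ (since $F^e_*F^{e_1}_*R=F^{e+e_1}_*R$) and $\sigma$ additive, so decomposing $F^{e_1}_*R$ into indecomposables turns $(\dagger)$ into the statement that in the zero-sum relation $\sum_i c_i\bigl((\rank M_i)s(R)-\sigma(M_i)\bigr)=0$ every term vanishes. The natural attack is to track free summands through the split inclusion $F^e_*N\mid F^{e+e_1}_*R$: composing the split surjection $F^{e+e_1}_*R\twoheadrightarrow F^e_*N$ with a split surjection $F^e_*N\twoheadrightarrow R^{a}$ shows $\frk F^e_*N\le\frk F^{e+e_1}_*R$ — but this only gives $\sigma(N)\le r_{e_1}s(R)$, weaker by the factor $r_{e_1}/\rank N$. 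Sharpening it requires simultaneously controlling the complementary summand, via the decomposition $F^{e+e_1}_*R\cong(F^{e_1}_*R)^{\frk F^e_*R}\oplus F^{e_1}_*(\text{non-free part of }F^e_*R)$ and a comparison of ranks, or an induction on $\rank N$; this is where the real work lies. By contrast, the passage to the completion, the translation via $T_{-D}$, and the bookkeeping above are routine, and the cyclic-group refinement of Lemma \ref{lem:refinement_cor2.2} gives an alternative packaging of the same reduction should it be needed.
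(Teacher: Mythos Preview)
Your reduction is correct and matches the paper's: the identification $a_e^{R(D)}(R)=\frk F^e_*R(-p^eD)$ via the equivalence $T_{-D}$ on reflexives is exactly the paper's first step, and your observation that $\{-p^eD\}_e$ ranges over finitely many torsion classes, so that it suffices to show $\frk F^e_*R(E)/r_e\to s(R)$ for each fixed torsion $E$, is also the paper's second step.

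The gap is that you never prove $(\dagger)$. You explicitly say ``this is where the real work lies'' and sketch possible attacks (tracking complementary summands, induction on rank) without carrying any of them out. Your limsup bookkeeping is fine: given $(\dagger)$ for both $R(E)$ and $M'$, the identity $\frk F^e_*R(E)+\frk F^e_*M'=\frk F^{e+e_1}_*R$ does force both sequences to converge to $s(R)$ and $(r_{e_1}-1)s(R)$ respectively. But $(\dagger)$ itself is the entire content of the lemma, and you have not supplied it.

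The paper closes this gap by citation rather than by a bespoke argument: Tucker's theorem \cite[Theorem 4.11]{Tucker_2012} asserts that for any finitely generated $R$-module $M$ over an $F$-finite local domain, $\lim_e \frk F^e_*M/\rank F^e_*R$ exists and equals $(\rank M)\cdot s(R)$. Applied to $M=R(E)$ this gives $\frk F^e_*R(E)/r_e\to s(R)$ immediately, which is strictly stronger than your $(\dagger)$ and renders the splitting $F^{e_1}_*R\cong R(E)\oplus M'$ and the limsup gymnastics unnecessary. In other words, you are attempting to reprove (a special case of) Tucker's existence theorem from scratch; the paper simply invokes it. Your completion step is then also superfluous: the paper's argument for $a_e^{R(D)}(R)=\frk F^e_*R(-p^eD)$ proceeds by a direct ``tensor back and contradict maximality'' argument that does not require additivity of free rank under Krull--Schmidt, and Tucker's theorem holds over $R$ without passing to $\widehat R$.
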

\begin{proof}
    This proof consists of two parts. We first show that $\frk_R F^e_*R(-p^eD) = a_e^{R(D)}(R)$, and then we calculate the limit.
    
    \bigskip
    
    First $e \in \bN$ and let $n = a_e^{R(D)}(R)$. We have $F^e_*R \cong R(D)^{n}\oplus M$, where $M$ is a finitely generated $R$-module without an $R(D)$ summand. By Proposition \ref{prop:divisor_arithmetic}, applying $- \otimes_R R(-D)$ and then $\Hom_R(\Hom_R(-,R),R)$ to this isomorphism we obtain
    \begin{equation}\label{eqn:1_in_main_lemma}
        F^e_*R(-p^eD) \cong R^n \oplus N
    \end{equation}
    where $N = \Hom_R(\Hom_R(M \otimes_R R(-D),R),R)$. We claim $n = \frk F^e_*R(-p^eD)$. Suppose for the sake of contradiction that $N$ had a free summand, i.e. that $N \cong R \oplus P$ for some $R$-module $P$. Tensoring equation \ref{eqn:1_in_main_lemma} by $R(D)$ and reflexifying gives us
    \begin{equation*}
        F^e_*R \cong R(D)^n \oplus R(D) \oplus \Hom_R(\Hom_R(P\otimes_R R(D),R),R).
    \end{equation*}
    This means $R(D)^{n+1}$ appears as a summand in a direct sum decomposition of $F^e_*R$, which contradicts the maximality of $n$. Thus, $\frk_R F^e_*R(-p^eD) = a_e^{R(D)}(R)$.
    
    \bigskip
    
    For the second part of the proof, we first establish notation. Polstra proved that the torsion subgroup $\tors(\Cl(R))$ of the divisor class group of a strongly $F$-regular ring is finite \cite{polstra2020theorem}, so we may enumerate them: $\tors(\Cl(R)) = \{D_1,...,D_k\}$. We denote the $e$-th term in the sequence defining the $F$-signature of $R(D_i)$ as follows:
    \begin{equation*}
        s_e(R(D_i)) = \frac{\frk F^e_*R(D_i)}{\rank F^e_* R}.
    \end{equation*}
    Since each divisorial ideal is a finitely generated rank 1 module, Tucker tells us \cite[Theorem 4.11]{Tucker_2012}
    \begin{equation*}
        \lim_{e\rightarrow \infty} s_e(R(D_i)) = s(R(D_i)) = s(R)\cdot \rank R(D_i) = s(R)
    \end{equation*}
    for each $1 \leq i \leq k$. In particular, $s_e(R(D_i))$ and $s_e(R(D_j))$ are equivalent Cauchy sequences for each $1 \leq i, j\leq k$. Now set 
    \begin{equation*}
        b_e = \frac{a_e^{R(D)}(R)}{\rank F^e_*R}
    \end{equation*}
    for sake of clarity. We show that the sequence $\{b_e\}$ is equivalent to $\{s_e(R(D_1))\}$ as a Cauchy sequence and conclude that $\lim b_e = s(R)$.
    
    Fix $\epsilon > 0$. By the equivalence of Cauchy sequences, for each $1 \leq i \leq k$, we may find $N_i \in \bN$ such that for all $e \geq N_i$, $|s_e(R(D_i)) - s_e(R(D_j))| < \epsilon$. Notice that since $a_e^{R(D)}(R) = \frk F^e_*R(-p^eD)$ and $-p^eD$ is a torsion divisor, $b_e$ is equal to $s_e(R(D_i))$ for some $1 \leq i \leq k$. If we let $N = \max\{N_1,...,N_k\}$, then for all $e \geq N$, we have
    \begin{equation*}
        |s_e(R(D_1)) - b_e| \leq \max \big\{|s_e(R(D_1)) - s_e(R(D_i))| ~:~ 1 \leq i \leq k \big\} < \epsilon.
    \end{equation*}
    Thus, $\{s_e(R(D_1)) - b_e\}$ is equivalent to the $0$ sequence, so $\{b_e\}$ is equivalent to $\{s_e(R(D_1))\}$ as a Cauchy sequence. We conclude that $\lim_{e\rightarrow \infty} b_e = s(R)$.
\end{proof}
\begin{theorem}\label{thm:main_thm}
    Let $(R,\frakm,k)$ be a local $F$-finite and strongly $F$-regular ring of prime characteristic $p > 0$. Then $$|\tors(\Cl(R))| \leq 1/s(R),$$
    where $\tors(\Cl(R))$ is the torsion subgroup of the divisor class group of $R$.
\end{theorem}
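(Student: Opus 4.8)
The plan is to realize all torsion divisorial ideals as simultaneous direct summands of $F^e_*R$, each with its maximal possible multiplicity, then compare ranks and pass to the limit.

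First I would invoke Polstra's finiteness theorem to write $\tors(\Cl(R)) = \{D_1,\dots,D_k\}$ with $k = |\tors(\Cl(R))|$, so that the goal becomes $k \leq 1/s(R)$. By Lemma~\ref{lem:e0_so_all_D_are_summands}, each $R(D_i)$ becomes a direct summand of $F^e_*R$ once $e$ is sufficiently large; taking the maximum of these finitely many thresholds yields a single $e_0$ such that for every $e \geq e_0$ all of $R(D_1),\dots,R(D_k)$ are direct summands of $F^e_*R$ at once. Since the $D_i$ represent pairwise distinct classes of $\Cl(R)$, Lemma~\ref{lem:refinement_cor2.2} applied with $C = F^e_*R$ then shows that
$$R(D_1)^{a_e^{R(D_1)}(R)} \oplus \cdots \oplus R(D_k)^{a_e^{R(D_k)}(R)}$$
is a direct summand of $F^e_*R$ for all $e \geq e_0$.

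Next I would compare ranks. Each divisorial ideal has rank $1$, so the displayed module has rank $\sum_{i=1}^k a_e^{R(D_i)}(R)$, and since it is a direct summand of $F^e_*R$ this forces $\sum_{i=1}^k a_e^{R(D_i)}(R) \leq \rank_R F^e_*R$ for all $e \geq e_0$. Dividing through by $\rank_R F^e_*R$ and letting $e \to \infty$, Lemma~\ref{lem:RD_summands_tend_to_Fsig} tells us each of the $k$ summands converges to $s(R)$, so the inequality passes in the limit to $k\, s(R) \leq 1$, which is exactly $|\tors(\Cl(R))| \leq 1/s(R)$.

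The substance of the argument is front-loaded into the preliminary lemmas, so no serious obstacle remains; the one point that genuinely matters is that all $k$ divisorial ideals must appear in a \emph{single} decomposition of $F^e_*R$, each carrying its individually maximal multiplicity $a_e^{R(D_i)}(R)$. This is precisely why the refinement Lemma~\ref{lem:refinement_cor2.2} is required rather than \cite[Corollary 2.2]{polstra2020theorem} alone, which would only supply multiplicity one and hence only the much weaker bound $k \leq 1/s(R)$ replaced by nothing useful. Everything else is routine bookkeeping with ranks and the convergence of the $F$-signature sequences.
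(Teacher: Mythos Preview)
Your proposal is correct and follows essentially the same route as the paper: enumerate the torsion divisors using Polstra's finiteness, invoke Lemma~\ref{lem:e0_so_all_D_are_summands} and Lemma~\ref{lem:refinement_cor2.2} to exhibit $\bigoplus_i R(D_i)^{a_e^{R(D_i)}(R)}$ as a summand of $F^e_*R$, compare ranks, and pass to the limit via Lemma~\ref{lem:RD_summands_tend_to_Fsig}. Your closing remark about why the refinement Lemma~\ref{lem:refinement_cor2.2} (rather than merely \cite[Corollary~2.2]{polstra2020theorem}) is essential is also on point.
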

\begin{proof}
    Set
    \begin{equation*}
        n_e = \sum_{D \in \tors(\Cl(R))} a_e^{R(D)}x(R).
    \end{equation*}
    Fix $e_0$ as in Lemma \ref{lem:e0_so_all_D_are_summands}, and let $e \geq e_0$. For each torsion divisor $D$, $R(D)$ is a summand of $F^e_*R$, so by Lemma \ref{lem:refinement_cor2.2} and the fact that $R(D)$ is rank 1 for any torsion divisor, we have that 
    \begin{equation*}
        n_e = \sum_{D \in \tors(\Cl(R))} a_e^{R(D)}(R)\cdot\rank R(D) \leq \rank F^e_*R.
    \end{equation*}
    By Lemma \ref{lem:RD_summands_tend_to_Fsig},
    \begin{align*}
        \lim_{e\rightarrow \infty} \frac{n_e}{\rank F^e_*R} 
        &= \lim_{e\rightarrow \infty} \sum_{D \in \tors(\Cl(R))} \frac{a_e^{R(D)}(R)}{\rank F^e_*R} \\
        &= \sum_{D \in \tors(\Cl(R))} \lim_{e\rightarrow \infty} \frac{a_e^{R(D)}(R)}{\rank F^e_*R} \\
        &= \sum_{D\in \tors(\Cl(R))} s(R) \\
        &= |\tors(\Cl(R))|\cdot s(R).
    \end{align*}
    The limit commutes with the sum since $|\tors(\Cl(R))| < \infty$ by Corollary 3.3 in \cite{polstra2020theorem}. Because $n_e \leq \rank F^e_*R$, 
    \begin{equation*}
        |\tors(\Cl(R))| \cdot s(R) = \lim_{e\rightarrow \infty} \frac{n_e}{\rank F^e_*R}  \leq 1,
    \end{equation*}
    and we conclude 
    \begin{equation*}
        |\tors(\Cl(R))| \leq \frac{1}{s(R)}.
    \end{equation*}
\end{proof}
We immediately obtain the following corollary to Theorem \ref{thm:main_thm}. Local results often extend to graded rings via localization at the maximal ideal, so this is not surprising.
\begin{corollary}
    Let $R$ be a $\bN$-graded $F$-finite and strongly $F$-regular ring of prime characteristic $p > 0$ such that $R_0$ is a field. Then
    \begin{equation*}
        |\tors(\Cl(R))| \leq \frac{1}{s(R)}.
    \end{equation*}
\end{corollary}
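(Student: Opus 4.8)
The plan is to reduce to Theorem~\ref{thm:main_thm} by localizing at the homogeneous maximal ideal $\frakm = \bigoplus_{i>0} R_i$. First I would record that $R_\frakm$ is again $F$-finite (a localization of an $F$-finite ring is $F$-finite) and strongly $F$-regular, since strong $F$-regularity passes to localizations. Thus Theorem~\ref{thm:main_thm} applies to the local ring $(R_\frakm,\frakm R_\frakm)$ and yields $|\tors(\Cl(R_\frakm))| \leq 1/s(R_\frakm)$, so the whole corollary comes down to the two comparisons $s(R) = s(R_\frakm)$ and $|\tors(\Cl(R))| \leq |\tors(\Cl(R_\frakm))|$.

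For the $F$-signatures, the point is that detecting free summands of a finitely generated graded $R$-module is insensitive to localizing at $\frakm$: a finitely generated graded projective module over an $\bN$-graded ring with $R_0$ a field is free, and by graded Nakayama the maximal rank of a free graded summand of $F^e_*R$ equals the free rank of the $R_\frakm$-module $(F^e_*R)_\frakm = F^e_*(R_\frakm)$, so $\frk_R F^e_*R = \frk_{R_\frakm} F^e_* R_\frakm$. Since also $\rank_R F^e_*R = \rank_{R_\frakm} F^e_* R_\frakm = p^{e\dim R}$, the two sequences whose limits define $s(R)$ and $s(R_\frakm)$ agree termwise, hence the limits agree.

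For the class groups I would invoke the standard fact that the divisor class group of an $\bN$-graded normal domain with $R_0$ a field is generated by the classes of homogeneous height $1$ primes, so that every class is represented by a homogeneous divisor $D$, with $R(D)$ a graded reflexive rank $1$ module. The restriction map $\Cl(R)\to\Cl(R_\frakm)$ is then injective: if $D$ is homogeneous and the class of $R(D)$ dies in $\Cl(R_\frakm)$, then $R(D)_\frakm$ is free of rank $1$, so $R(D)/\frakm R(D)$ is one-dimensional over $k = R_0$, and graded Nakayama forces $R(D) = R\cdot f$ for a single homogeneous $f$ in the fraction field; then $R\to R(D)$, $1\mapsto f$, is an isomorphism of graded modules, so $D = \div(f)$ is principal. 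Consequently $\tors(\Cl(R))$ injects into $\tors(\Cl(R_\frakm))$, and combining the inequalities gives
\[
|\tors(\Cl(R))| \leq |\tors(\Cl(R_\frakm))| \leq \frac{1}{s(R_\frakm)} = \frac{1}{s(R)}.
\]

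I expect the main obstacle to be handling $s(R)=s(R_\frakm)$ cleanly — in particular making the graded-versus-local free-rank comparison airtight — together with pinning down the precise statement and reference that the class group of a positively graded normal domain over a field coincides with that of its localization at the irrelevant maximal ideal; the remaining steps are routine once these are in place.
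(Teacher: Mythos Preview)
Your proposal is correct and follows essentially the same route as the paper: localize at the homogeneous maximal ideal, apply Theorem~\ref{thm:main_thm} to $R_\frakm$, and then transfer back via the two comparisons $\Cl(R)\hookrightarrow\Cl(R_\frakm)$ and $s(R)=s(R_\frakm)$. The only difference is that the paper dispatches both comparison steps by citation---\cite[Corollary 10.3]{fossum} for the (in fact bijective) map on class groups and \cite[Corollary 6.19]{destefani2018global} for the equality of $F$-signatures---whereas you sketch direct graded-Nakayama arguments for each; your anticipated ``main obstacle'' is exactly what those references resolve.
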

\begin{proof}
    Let $\frakm$ denote the unique homogeneous maximal ideal of $R$. Strong $F$-regularity is a local property, so the localization $R_\frakm$ is strongly $F$-regular and therefore $|\tors(\Cl(R_\frakm))| \leq \frac{1}{s(R_\frakm)}$ by Theorem \ref{thm:main_thm}. 
    
    We know $\Cl(R) \rightarrow \Cl(R_\frakm)$ is a bijection by \cite[Corollary 10.3]{fossum} and that $s(R) = s(R_\frakm)$ by \cite[Corollary 6.19]{destefani2018global}, so we have the desired result.
\end{proof}

\subsection{Examples}
Here we provide two examples of graded $F$-finite strongly $F$-regular rings $R$ of prime characteristic $p > 0$ to illustrate that the inequality in \ref{thm:main_thm} is indeed not strict. We note the $F$-signature may be computed via the formula $s(R) = 2 - e_{HK}(R)$ in both examples since $e(R) = 2$, but we opt instead for arguments which do not invoke the Hilbert-Kunz multiplicity. 
\begin{example}\label{ex:strict_inequality}
    Suppose $p>0$ is prime and $R = \frac{\bF_p[w,x,y,z]}{(wx-yz)}$. This is a determinantal ring with $r = s = 2$, in the notation of Singh \cite[Example 3.1]{Anurag_f-sig}, and therefore has dimension $d = r + s - 1 = 3$. By Singh's example, we have that
    \begin{align*}
        s(R) = \frac{1}{d!}\sum_{i=0}^s(-1)^i\binom{d+1}{i}(s-i)^d = \frac{1}{3!}\sum_{i=0}^2(-1)^i\binom{4}{i}(2-i)^3 = \frac{2}{3}.
    \end{align*}
    
    \noindent Since $R$ is a determinant ring satisfying the hypotheses of \cite[7.3.5]{Bruns-Herzog}, we have that $\Cl(R) = \bZ$ and hence $|\tors(\Cl(R))| = 1$, so $|\tors(\Cl(R))| < \frac{1}{s(R)}.$ 
    
    For a local example, let $\frakm = (w,x,y,z)$ and consider $R_\frakm$. By \cite[Corollary 10.3]{fossum} we immediately see $\Cl(R) = \Cl(R_\frakm)$ and by \cite[Corollary 6.19]{destefani2018global} $s(R_\frakm) = s(R)$, so
    \begin{equation*}
        |\tors(\Cl(R_\frakm))| < \frac{1}{s(R_\frakm)}.
    \end{equation*}
\end{example}

\begin{example}\label{ex:equality}
    Suppose $p>0$ is prime and $n \geq 2$ and set $R = \frac{\bF_p[x,y,z]}{xy - z^n}$. The class group of $R$ is isomorphic to $\bZ/n\bZ$ by \cite[Corollary 3.4]{Anurag-divclass}, so it remains to find $s(R)$. Notice that we have the isomorphism $\frac{\bF_p[x,y,z]}{xy - z^n} \cong \bF_p[x^n,xy,y^n]$. The latter ring lends itself well to the calculation of $F$-signature as it is an affine semigroup ring, so we redefine $R = \bF_p[x^n,xy,y^n]$, set $A = \bF_p[x,y]$ and note that $R \subseteq A$.
    
    Fix $e \in \bN$ and set $q = p^e$ and let $\frakm \subseteq A$ denote the homogeneous maximal ideal. By \cite[Lemma 4]{Anurag_f-sig},
    \begin{equation*}
        a_e(R) = \ell\left(\frac{R}{\frakm^{[q]}\cap R}\right).
    \end{equation*}
    Let $S_e$ denote the ring $\frac{R}{\frakm^{[p^e]}\cap R}$. We can form a maximal chain of submodules of $S_e$ entirely from ideals generated by monomials. To see this, let $T$ denote the collection of distinct monomials in $S_e$ and let 
    \begin{equation*}
        (0) = I_0 \subsetneq I_1 \subsetneq ... \subsetneq I_n = S_e
    \end{equation*}
    be a maximal chain of ideals in $S_e$ whose generators are in $T$. Suppose $0\leq i \leq n$, and choose elements $f_1,...,f_m \in T$ so that $(f_1,...,f_m) = I_i$. If $I_{i+1}$ contained two monomials not in $I_i$, then the above chain would not be maximal, so we can find a monomial $f_m \in T$ so that $(f_1,...,f_m,f_{m+1}) = I_{i+1}$. 
    
    Now suppose we have a nonzero coset $\ol{g} \in I_{i+1}/I_{i}$. The representative $g$ must be a nonzero element in $I_{i+1} \setminus I_i$, and therefore $g = a_1f_1 + ... + a_{m+1}f_{m+1}$ with $a_{m+1} \neq 0$. This means the set $\{f_1,...,f_m,g\}$ generates $I_{i+1}$ as an ideal, and so $\langle \ol{g}\rangle = I_{i+1}/I_i$. Since any nonzero element of $I_{i+1}/I_{i}$ generates the entire group, $I_{i+1}/I_{i}$ is simple. This means the above maximal sequence is a composition series, and it therefore suffices to count the number of distinct monomials in $S_e$ to determine $\ell (S_e)$.
    
    The nonzero monomials $x^ay^b$ in $S_e$ are precisely those monomials in $R$ which are not killed by $\frakm^{[q]}$. A monomial $x^ay^b \in R$ must satisfy $x^ay^b = x^{ni}(xy)^jy^{nk} = x^{ni + j}y^{nk + j}$ for some positive integers $i,j$ and $k$, which implies that $a \equiv b \mod (n)$. If $x^ay^b$ is nonzero in $S_e$ then it is not contained in $\frakm^{[q]} = F^e((x,y))A = (x^q,y^q)$ and hence $a < q$ and $b < q$. Likewise, it can be easily seen that any monomial $x^ay^b$ in $A$ for which $a < q$, $b < q$, and $a \equiv b \mod (n)$ is a monomial in $S_e$, hence there is a bijection between the set of distinct monomials in $S_e$ and pairs of nonnegative integers $(a,b)$ satisfying these conditions. 
    
    Suppose for a moment that $q = mn$ for some $m \in \bN$, and fix $a$ so that $0 \leq a \leq q - 1$. The integers congruent to $a$ modulo $n$ are of the form $ni + a$ for some $i\in \bN$, and there are exactly $m$ such distinct integers $b$ such that $0 \leq b \leq q - 1$. As there are $mn$ choices for $a$ and $m$ choices for $b$ given $a$, there are exactly $m^2n$ pairs of integers $(a,b)$ such that $a < q$, $b < q$, and $a \equiv b \mod (n)$. Therefore $a_e(R) = m^2n$.
    
    Now suppose $q$ is once again arbitrary and pick $m_q$ to be the maximal integer such that $m_q n \leq q$. By the special case addressed above we know $m_q^2n \leq a_e(R) \leq (m_q + 1)^2n$. The ring $R$ has Krull dimension 2, therefore $\rank F^e_*R = p^{ed} = q^2$. We have the equality
    \begin{equation*}
        q^2 - (m_qn)^2 = 2q(q - m_qn) - (q - m_qn)^2
    \end{equation*}
    from which we obtain
    \begin{equation*}
        q^2 - (m_qn)^2 \leq 2qn - (q - m_qn)^2 \leq 2qn.
    \end{equation*}
    Using this inequality we see
    \begin{equation*}
        \frac{1}{n} - \frac{m_q^2n}{q^2} \leq \frac{2}{q}
    \end{equation*}
    and
    \begin{equation*}
        \frac{(m_q + 1)^2n}{q^2} - \frac{1}{n} \leq \frac{2qn + n^2 + 2q}{q^2}.
    \end{equation*}
    The rightmost terms in both of the above inequalities approach $0$ as $q \rightarrow \infty$, hence
    \begin{equation*}
        \frac{1}{n} = \lim_{q\rightarrow \infty}  \frac{(m_q + 1)^2n}{q^2} \leq \lim_{q\rightarrow \infty}\frac{a_e(R)}{q^2} \leq \lim_{q\rightarrow \infty}\frac{(m_q + 1)^2n}{q^2} = \frac{1}{n}
    \end{equation*}
    and $\lim_{e\rightarrow \infty}\frac{a_e(R)}{p^{2e}} = \frac{1}{n}$. We conclude that $s(R) = 1/n$ and $|\tors(\Cl(R))| = n$, and in particular, that $$|\tors(\Cl(R))| = 1/s(R).$$
    
    \noindent This result easily localizes as in Example \ref{ex:strict_inequality}.
\end{example}

\bibliographystyle{plain}
\bibliography{bib.bib}

\begin{thebibliography}{10}

\bibitem{aberbach2002fsignature}
Ian~M. Aberbach and Graham~J. Leuschke.
\newblock The {$F$}-signature and strong {$F$}-regularity.
\newblock {\em Math. Res. Lett.}, 10(1):51--56, 2003.

\bibitem{Bruns-Herzog}
Winfried Bruns and J\"{u}rgen Herzog.
\newblock {\em Cohen-{M}acaulay rings}, volume~39 of {\em Cambridge Studies in
  Advanced Mathematics}.
\newblock Cambridge University Press, Cambridge, 1993.

\bibitem{carva2017}
Javier Carvajal-Rojas.
\newblock Finite torsors over strongly {$F$}-regular singularities, 2017.
\newblock Preprint, arXiv:1710.06887v5 [math.AG].

\bibitem{schwede2016}
Javier Carvajal-Rojas, Karl Schwede, and Kevin Tucker.
\newblock Fundamental groups of {$F$}-regular singularities via
  {$F$}-signature.
\newblock {\em Ann. Sci. \'{E}c. Norm. Sup\'{e}r. (4)}, 51(4):993--1016, 2018.

\bibitem{dao2016finite}
Hailong Dao and Tony Se.
\newblock Finite {$F$}-type and {$F$}-abundant modules, 2016.
\newblock Preprint, arXiv:1603.00334v1 [math.AC].

\bibitem{fossum}
Robert~M. Fossum.
\newblock {\em The divisor class group of a {K}rull domain}.
\newblock Springer-Verlag, New York-Heidelberg, 1973.
\newblock Ergebnisse der Mathematik und ihrer Grenzgebiete, Band 74.

\bibitem{Hart_94}
Robin Hartshorne.
\newblock Generalized divisors on {G}orenstein schemes.
\newblock In {\em Proceedings of {C}onference on {A}lgebraic {G}eometry and
  {R}ing {T}heory in honor of {M}ichael {A}rtin, {P}art {III} ({A}ntwerp,
  1992)}, volume~8, pages 287--339, 1994.

\bibitem{Huneke_2002}
Craig Huneke and Graham~J. Leuschke.
\newblock Two theorems about maximal {C}ohen-{M}acaulay modules.
\newblock {\em Math. Ann.}, 324(2):391--404, 2002.

\bibitem{Patakfalvi_2013}
Zsolt Patakfalvi and Karl Schwede.
\newblock Depth of {$F$}-singularities and base change of relative canonical
  sheaves.
\newblock {\em J. Inst. Math. Jussieu}, 13(1):43--63, 2014.

\bibitem{polstra2020theorem}
Thomas Polstra.
\newblock A theorem about maximal {C}ohen-{M}acaulay modules, 2020.
\newblock Preprint, arXiv:2002.04661v2 [math.AC].

\bibitem{polstra2019equimultiplicity}
Thomas Polstra and Ilya Smirnov.
\newblock Equimultiplicity theory of strongly {$F$}-regular rings, 2019.
\newblock Preprint, arXiv:1906.01162v2 [math.AC].

\bibitem{Anurag_f-sig}
Anurag~K. Singh.
\newblock The {$F$}-signature of an affine semigroup ring.
\newblock {\em J. Pure Appl. Algebra}, 196(2-3):313--321, 2005.

\bibitem{Anurag-divclass}
Anurag~K. Singh and Sandra Spiroff.
\newblock Divisor class groups of graded hypersurfaces.
\newblock In {\em Algebra, geometry and their interactions}, volume 448 of {\em
  Contemp. Math.}, pages 237--243. Amer. Math. Soc., Providence, RI, 2007.

\bibitem{smith2002simplicity}
Karen~E. Smith and Michel Van~den Bergh.
\newblock Simplicity of rings of differential operators in prime
  characteristic.
\newblock {\em Proc. London Math. Soc. (3)}, 75(1):32--62, 1997.

\bibitem{destefani2018global}
Alessandro~De Stefani, Thomas Polstra, and Yongwei Yao.
\newblock Global frobenius betti numbers and {$F$}-splitting ratio, 2018.
\newblock Preprint, arXiv:1811.11022v1 [math.AC].

\bibitem{Tucker_2012}
Kevin Tucker.
\newblock {$F$}-signature exists.
\newblock {\em Invent. Math.}, 190(3):743--765, 2012.

\end{thebibliography}


\end{document}